\def\BBox{\kern  -0.2cm\hbox{\vrule width 0.2cm height 0.2cm}}
\newtheorem{example}{Example}
\newtheorem{teo}{Theorem}[section]
\newtheorem{coro}[teo]{Corollary}
\newtheorem{lema}[teo]{Lemma}
\newtheorem{prop}[teo]{Proposition}
\theoremstyle{definition}
\theoremstyle{remark}
\title{A note on two orthogonal totally $C_4$-free one-factorizations
	of complete graphs}
\author{Adrián Vázquez-Ávila\thanks{adrian.vazquez@unaq.edu.mx}\\
{\small Subdirección de Ingeniería y Posgrado}\\
{\small Universidad Aeronáutica en Querétaro}\\
}
\date{}
\begin{document}
\maketitle
\begin{abstract}
A pair of orthogonal one-factorizations $\mathcal{F}$ and $\mathcal{G}$ of the complete graph $K_n$ is totally $C_4$-free, if the union $F\cup G$, for any $F,G\in\mathcal{F}\cup\mathcal{G}$, does not include a cycle of length four.

In this note, we prove if $q\equiv3$ (mod 4) is a prime power with $q\geq11$, then there is a pair of orthogonal totally $C_4$-free one-factorizations of $K_{q+1}$.
\end{abstract}

\textbf{Keywords.} One-factorization, Strong starters, $C_4$-free.

\section{Introduction}
An \emph{one-factor} of a graph $G$ is a regular spanning subgraph of degree one. An \emph{one-factorization} of a graph G is a set $\mathcal{F}=\{F_1, F_2,\ldots, F_n\}$ of edge disjoint one-factors such that $E(G)=\displaystyle\cup_{i=1}^nE(F_i)$. Two one-factorizations $\mathcal{F}=\{F_1, F_2,\ldots, F_n\}$ and $\mathcal{H}=\{H_1, H_2,\ldots, H_n\}$ of a graph $G$ are \emph{orthogonal} if $|F\cap H|\leq1$, for every $F\in\mathcal{F}$ and $H\in\mathcal{H}$. A one-factorization $\mathcal{F}=\{F_1, F_2,\ldots, F_n\}$ of a graph $G$ is said to be \emph{$k$-cycle free} if $F_i\cup F_j$ does not contains a cycle of length $k$, $C_k$.

The existence of a $C_4$-free one-factorization of the complete graph $K_n$, for even $n\geq6$, has already been observed by Phelps et. al. in \cite{Phelps}, where they were used to give the existence of simple quadruple systems with index three. 

\begin{teo}\cite{Phelps}
A $C_4$-free one-factorization of complete graphs $K_n$ exists, if and only if, $n$ is even  $n\geq6$.	
\end{teo}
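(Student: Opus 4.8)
I would treat the two directions separately, the sufficiency being the substantive part. For \textbf{necessity}, observe first that every one-factor is a perfect matching, so $n$ must be even. It then suffices to rule out $n=4$ (the case $n=2$ is degenerate: there is a single factor and no pair whose union one could take). In $K_4$ the only one-factorization is the set of its three perfect matchings, and the union of any two of them is a two-regular connected graph on the four vertices, i.e.\ a four-cycle; hence no $C_4$-free one-factorization of $K_4$ exists. Thus a $C_4$-free one-factorization forces $n$ even with $n\ge 6$.

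For \textbf{sufficiency}, write $n=2m$ and begin from the standard patterned (starter-based) one-factorization on the vertex set $\mathbb{Z}_{2m-1}\cup\{\infty\}$, given by
\[
F_i=\{\{\infty,i\}\}\cup\{\{i+j,i-j\}:1\le j\le m-1\},\qquad i\in\mathbb{Z}_{2m-1}.
\]
Since each $F_i$ is the translate $F_0+i$, a four-cycle in some $F_i\cup F_j$ occurs if and only if one occurs in $F_0\cup F_k$ with $k=j-i\neq 0$, so it is enough to analyse $F_0\cup F_k$. I would split a putative four-cycle according to whether it meets $\infty$. The two opposite edges of the cycle lying in $F_0$ have vertex-sum $0$ and those in $F_k$ have vertex-sum $2k$; for a four-cycle on finite vertices this yields $a+b=c+d=0$ and $b+c=d+a=2k$, whence $4k\equiv 0\pmod{2m-1}$, impossible for $k\neq0$ because $2m-1$ is odd. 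A four-cycle through $\infty$ must read $\infty-0-q-k-\infty$ with $\{0,q\}\in F_k$ and $\{q,k\}\in F_0$, forcing $q=2k$ and then $3k\equiv 0\pmod{2m-1}$. Therefore the patterned one-factorization is $C_4$-free exactly when $3\nmid(2m-1)$, which disposes of all such orders in one stroke.

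The \textbf{main obstacle} is the residual case $3\mid(2m-1)$, where the offending configuration comes entirely from the single starter pair $\{(2m-1)/3,\,2(2m-1)/3\}$, producing four-cycles through $\infty$ at $k=(2m-1)/3$ and $k=2(2m-1)/3$. Here I would replace the patterned starter by a modified starter of $\mathbb{Z}_{2m-1}$ that repairs this pair (together with one further pair, so as to keep the differences a complete system of representatives) — that is, a suitably chosen strong starter — and then rerun the same two-case computation to confirm that neither the finite relation $4k\equiv0$ nor the now-altered $\infty$-relation has a nonzero solution. The delicate part is to produce such a starter uniformly across all $2m-1\equiv0\pmod 3$ and to verify $C_4$-freeness of the resulting factorization; the finitely many small orders that slip through this pattern (beginning with $K_{10}$) I would settle by exhibiting explicit one-factorizations and checking every pair of factors directly.
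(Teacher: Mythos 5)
Your necessity argument is fine, and your patterned-starter computation is correct: the vertex-sum argument ruling out finite four-cycles via $4k\equiv 0\pmod{2m-1}$ and the $\infty$-cycle condition $3k\equiv 0\pmod{2m-1}$ together reprove exactly the criterion the paper quotes from \cite{DinitzSequentially} (the patterned one-factorization of $K_{n+1}$ is $C_4$-free iff $n\not\equiv 0\pmod 3$). Note that the paper itself contains no proof of this theorem --- it is cited from \cite{Phelps} --- so the benchmark is that source's construction, which your sketch does not reproduce.

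The genuine gap is the residual class: the orders $n\equiv 4\pmod 6$ (that is, $2m-1\equiv 0\pmod 3$) form an infinite family --- one third of all even orders, beginning $10,16,22,\ldots$ --- and precisely there your proof stops at a declaration of intent. Two concrete reasons the sketch would not go through as written. First, once you replace two pairs of the patterned starter, the constant vertex-sum property that made both of your congruences fall out is destroyed for the modified pairs: a finite four-cycle using an altered pair $\{x,y\}$ with $x+y=s\neq 0$ no longer yields $4k\equiv 0$, so it is not a matter of ``rerunning the same two-case computation'' --- the case analysis must track which altered pairs and which of their translates can occur in a putative cycle, and this bookkeeping is exactly the technical core of the arguments in \cite{Phelps} and \cite{Meszka}. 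Second, the appeal to ``a suitably chosen strong starter'' cannot be made uniform as stated: strong starters do not exist for orders $3,5,7,9$ (which is why the present paper's own results require $q\geq 11$), so the very first bad order $K_{10}$ over $\mathbb{Z}_9$ falls outside the plan; and in any case strongness neither implies nor is implied by $C_4$-freeness, so you would still need to isolate the additional property your modified starters must satisfy and prove its existence for every $2m-1\equiv 0\pmod 3$. Until that family is exhibited and verified, you have proved sufficiency only for $n\equiv 0,2\pmod 6$, and the theorem's hard third remains open in your write-up.
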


In general, Meszka in \cite{Meszka} proved for each even $n$ and each even $k\geq4$ with $k\not=\frac{n}{2}$, the complete graph $K_n$ has a $C_k$-free one-factorization.

A pair of orthogonal one-factorizations $\mathcal{F}$ and $\mathcal{H}$ of the complete graph $K_n$ is $C_4$-free if $F\cup H$ does not contains a cycle of length four, for all $F\in\mathcal{F}$ and $H\in\mathcal{H}$. On the other hand, a pair of orthogonal one-factorizations $\mathcal{F}$ and $\mathcal{H}$ of complete the graph $K_n$ is \emph{totally} $C_4$-free, if $F\cup H$ does not include a cycle of length four, for all $F,H\in\mathcal{F}\cup\mathcal{H}$ \cite{Bao}. 

An interesting way for constructing one-factorizations of complete graphs is using starters of aditive Abelian groups of odd order: Let $\Gamma$ be a finite additive Abelian group of odd order $n=2k+1$, and let $\Gamma^*=\Gamma\setminus\{0\}$ be the set of non-zero elements of $\Gamma$. A \emph{starter} for $\Gamma$ is a set $S=\{\{x_1,y_1\},\ldots,\{x_k,y_k\}\}$ such that
$\left\{x_1,\ldots,x_k,y_1,\ldots,y_k\right\}=\Gamma^*$
and $\left\{\pm(x_i-y_i):i=1,\ldots,k\right\}=\Gamma^*$. Moreover, if $\left\{x_i+y_i:i=1,\ldots,k\right\}\subseteq\Gamma^*$ and $\left|\left\{x_i+y_i:i=1,\ldots,k\right\}\right|=q$, then $S$ is called \emph{strong starter} for $\Gamma$. There are some interesting results on strong starters for cyclic groups \cite{Avila,MR808085}, in particular for $\mathbb{Z}_n$ \cite{Skolem,AvilaSkolem,AvilaSkolem2,AvilaSkolem3} and $\mathbb{F}_q$ \cite{Avila,MR0325419,dinitz1984,MR0392622,MR0249314,MR0260604}, and for finite Abelian groups \cite{MR1010576,MR1044227}.	

Strong starters were first introduced by Mullin and Stanton in \cite{MR0234587} in constructing of Room
squares. Starters and strong starters have been useful to construct many combinatorial designs such as Room cubes \cite{MR633117}, Howell designs \cite{MR728501}, Kirkman triple systems \cite{MR808085,MR0314644}, Kirkman squares and cubes \cite{MR833796,MR793636},  and factorizations of complete graphs \cite{MR0364013,MR2206402,MR1010576,MR685627}.

Let $\Gamma$ be a finite additive Abelian group of odd order $n=2k+1$. It is well known that $F_\gamma=\{\{\infty, \gamma\}\}\cup\{\{x_i+\gamma,y_i+\gamma\} : 1 \leq i\leq k\}$, for all $\gamma\in\Gamma$, forms a one-factorization of the complete graph on $\Gamma\cup\{\infty\}$. Hence, if $F_0=\{\{\infty,0\}\}\cup\{\{x_i,y_i\} : 1 \leq i\leq k\}$, then $F_\gamma=F_0+\gamma$, for all $\gamma\in\Gamma^*$. On the other hand, let $S=\{\{x_i,y_i\}: 1\leq i\leq k\}$ and $T=\{\{u_i,v_i\}: 1\leq i\leq k\}$ be two starters for $\Gamma$. Without loss of generality, we assume $x_i-y_i=u_i-v_i$, for all $i=1,\ldots,k$. Then $S$ and $T$ are \emph{orthogonal starters} if $u_i-x_i=u_j-x_j$ implies $i=j$, and if $u_i\neq x_i$, for all $i=1,\ldots,q$.

Let $S=\{\{x_i,y_i\}:i=1,\ldots,k\}$ be a starter for a finite additive Abelian group $\Gamma$ of odd order $n=2k+1$. It is easy to see $-S=\{\{-x_i,-y_i\}:i=1,\ldots,k\}$ is also a starter for $\Gamma$. Also, the set of pairs $P = \{\{x,-x\}: x\in\Gamma^*\}$ is a starter for $\Gamma$, called \emph{the patterned starter}.

\begin{teo}\cite{MR623318}
If there is a strong starter $S$ in an additive Abelian group of odd order, then $S$, $-S$ and $P$ are pairwise orthogonal starters.		
\end{teo}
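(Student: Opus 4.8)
The plan is to check the three required orthogonality relations one at a time, using the description of orthogonality in which the two starters are first matched pair-by-pair by common difference and one then verifies that the induced translations $u_i-x_i$ are nonzero and pairwise distinct. The guiding observation is that the strong-starter hypothesis---that the sums $x_i+y_i$ are all nonzero and pairwise distinct---is precisely what delivers both of these conditions in each of the three cases.

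First I would handle $S$ and $-S$. Writing $S=\{\{x_i,y_i\}:1\le i\le k\}$ and setting $d_i=x_i-y_i$, the pair $\{-x_i,-y_i\}$ of $-S$ has unordered difference $\pm d_i$; since the $2k$ values $\pm d_j$ are all distinct and exhaust $\Gamma^*$, the only orientation of any pair of $-S$ producing the ordered difference $d_i$ is $(-y_i,-x_i)$, coming from the $i$-th pair. Matching this against $(x_i,y_i)$ gives $u_i=-y_i$ and hence $u_i-x_i=-(x_i+y_i)$. Distinctness of these translations is immediate from distinctness of the sums, and $u_i\ne x_i$ is equivalent to $x_i+y_i\ne0$; both hold because $S$ is strong.

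Next I would treat $S$ and $P$. As $\Gamma$ has odd order, multiplication by $2$ is an automorphism, so every $d_i$ has a unique half and the pair of $P$ carrying ordered difference $d_i$ is $(d_i/2,-d_i/2)$. Taking $u_i=d_i/2=(x_i-y_i)/2$ yields $u_i-x_i=-(x_i+y_i)/2$, which is again nonzero and pairwise distinct exactly because the sums $x_i+y_i$ are. Finally, I would note that $-S$ is itself a strong starter, its sums being $-(x_i+y_i)$, still nonzero and distinct; orthogonality of $-S$ and $P$ then follows by applying the preceding argument verbatim to $-S$.

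The individual computations are routine, so the only real work is bookkeeping, and the main subtlety lies in the alignment step. Because the pairs are unordered, one must fix the correct orientation of each pair of $-S$ and of $P$ so that its ordered difference equals $d_i$; it is exactly this choice of orientation that converts the translation $u_i-x_i$ into a scalar multiple of the sum $x_i+y_i$, thereby reducing orthogonality to the defining properties of a strong starter. Getting the orientation right, together with invoking the invertibility of $2$ to make sense of $d_i/2$, is the crux of the argument.
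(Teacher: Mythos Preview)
Your argument is correct: the alignment by common difference is carried out carefully, the translations $u_i-x_i$ indeed reduce to $-(x_i+y_i)$ for the pair $(S,-S)$ and to $-(x_i+y_i)/2$ for the pair $(S,P)$, and the invertibility of $2$ in an abelian group of odd order is exactly what is needed to make the latter meaningful. The observation that $-S$ is again strong, so that the $(S,P)$ argument transfers verbatim to $(-S,P)$, is a clean way to finish.

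There is nothing to compare against in the paper itself: the theorem is quoted from Horton's article \cite{MR623318} and is stated without proof. Your write-up is a perfectly acceptable self-contained verification of the cited result.
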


\begin{lema}\cite{DinitzSequentially}
For odd $n\geq5$, the one-factorization of $K_{n+1}$ generated by the patterned starter is $C_4$-free, if and only if, $n\not\equiv 0$ (mod 3). 	
\end{lema}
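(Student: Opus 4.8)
The plan is to exploit a purely additive description of the factors. Since $n=|\Gamma|$ is odd, $2$ is invertible in $\Gamma$, so I would first record the following characterization of the factor $F_\gamma=F_0+\gamma$ generated by the patterned starter: for $a,b\in\Gamma$ the edge $\{a,b\}$ lies in $F_\gamma$ exactly when $a+b=2\gamma$, and the edge $\{\infty,a\}$ lies in $F_\gamma$ exactly when $a=\gamma$. This follows directly from $F_\gamma=\{\{\infty,\gamma\}\}\cup\{\{x+\gamma,-x+\gamma\}:x\in\Gamma^*\}$, since $(x+\gamma)+(-x+\gamma)=2\gamma$.

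Next I would fix $\gamma\neq\delta$ and analyze a putative $4$-cycle in $F_\gamma\cup F_\delta$. As the union of two edge-disjoint one-factors is $2$-regular, its edges alternate between the two factors around any cycle, so I split into the cycle avoiding $\infty$ and the cycle through $\infty$. If the cycle $v_1v_2v_3v_4$ avoids $\infty$, with $\{v_1,v_2\},\{v_3,v_4\}\in F_\gamma$ and $\{v_2,v_3\},\{v_4,v_1\}\in F_\delta$, then the four sum-equations yield both $v_1-v_3=2(\gamma-\delta)$ and $v_3-v_1=2(\gamma-\delta)$, whence $4(\gamma-\delta)=0$; since $n$ is odd this forces $\gamma=\delta$, a contradiction, so no $C_4$ can avoid $\infty$.

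If instead the cycle passes through $\infty$, then $\infty$ is joined to $\gamma$ in $F_\gamma$ and to $\delta$ in $F_\delta$, so the cycle must be $\infty-\gamma-w-\delta-\infty$ with $\{\gamma,w\}\in F_\delta$ and $\{w,\delta\}\in F_\gamma$. The sum-equations force $w=2\delta-\gamma=2\gamma-\delta$, equivalently $3(\gamma-\delta)=0$; conversely any $\gamma\neq\delta$ with $3(\gamma-\delta)=0$ produces such a closed walk, and I would check that $\infty,\gamma,w,\delta$ are pairwise distinct (using invertibility of $2$) so that it is a genuine $C_4$. Hence $F_\gamma\cup F_\delta$ contains a $C_4$ for some $\gamma\neq\delta$ if and only if $\Gamma$ has a nonzero element killed by $3$, i.e.\ an element of order $3$, which by Cauchy's theorem exists if and only if $3\mid n$. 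This gives the stated equivalence: the factorization is $C_4$-free precisely when $n\not\equiv0\pmod 3$.

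I expect the computations themselves to be routine; the care lies in the bookkeeping. The two points I would treat most carefully are the justification that the factors alternate around the cycle (so that exactly the sum-equations above apply) and the verification that the four vertices produced in the $3\mid n$ case are genuinely distinct, so that the obstruction is a true $4$-cycle rather than a degenerate closed walk. Translating the solvability of $3(\gamma-\delta)=0$ with $\gamma\neq\delta$ into the divisibility $3\mid n$ is the conceptual step linking the algebra to the claimed congruence.
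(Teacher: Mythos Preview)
The paper does not supply its own proof of this lemma; it is quoted from \cite{DinitzSequentially} without argument, so there is nothing in the paper to compare your approach against. On its own merits your argument is correct. The sum-characterization $\{a,b\}\in F_\gamma\iff a+b=2\gamma$ is valid because $2$ is a unit in a group of odd order; the alternation of the two factors along any cycle follows since each vertex has degree exactly one in each of $F_\gamma$ and $F_\delta$; the $\infty$-free case correctly collapses to $4(\gamma-\delta)=0$ and hence $\gamma=\delta$; and the $\infty$-case correctly reduces to $3(\gamma-\delta)=0$, with the distinctness of $\infty,\gamma,w,\delta$ following, as you say, from invertibility of $2$. The final translation to $3\mid n$ is immediate for the cyclic group (which is all the cited statement needs), and your Cauchy-type remark extends it to an arbitrary abelian $\Gamma$ of odd order.
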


Let $q$ be an odd prime power. An element $x\in\mathbb{F}_q^*$ is called a \emph{quadratic residue} if there exists an element $y\in\mathbb{F}_q^{*}$ such that $y^2=x$. If there is no such $y$, then $x$ is called a \emph{non-quadratic residue.} The set of quadratic residues of $\mathbb{F}_q^{*}$ is denoted by $QR(q)$ and the set of non-quadratic residues is denoted by $NQR(q)$. It is well known $QR(q)$ is a cyclic subgroup of $\mathbb{F}_q^{*}$ of order $\frac{q-1}{2}$ (see for example \cite{MR2445243}). As well as, it is well known, if either $x,y\in QR(q)$ or $x,y\in NQR(q)$, then $xy\in QR(q)$. Also, if $x\in QR(q)$ and $y\in NQR(q)$, then $xy\in NQR(q)$. For more details of this kind of results the reader may consult \cite{burton2007elementary,MR2445243}.

An interesting (strong) starter for $\mathbb{F}_q$, the finte field of order prime power $q$. Horton in \cite{MR623318} proved (also proved by the author in \cite{Avila}) the following
\begin{prop}\cite{MR623318}
	If $q\equiv3$ (mod 4) is an odd prime power ($q\neq3$) and $\beta\in NQR(q)\setminus\{-1\}$, then 
	\begin{eqnarray*}\label{strong_1}
		S_\beta=\left\{\{x,x\beta\}:x\in QR(q)\right\},
	\end{eqnarray*} 
	is a strong starter for $\mathbb{F}_q$.
\end{prop}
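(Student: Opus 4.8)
The plan is to verify the three defining conditions of a strong starter directly, exploiting the multiplicative structure of $QR(q)$ and $NQR(q)$ together with the fact that $q\equiv3\pmod4$ forces $-1\in NQR(q)$. Throughout I write $k=\frac{q-1}{2}=|QR(q)|$, so that $S_\beta$ consists of exactly $k$ pairs $\{x,x\beta\}$ indexed by $x\in QR(q)$, matching the required number of pairs for a starter of $\mathbb{F}_q$.

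First I would check that the pairs partition $\mathbb{F}_q^*$. Since $\beta\in NQR(q)$ and $x\in QR(q)$, the product $x\beta$ lies in $NQR(q)$; thus the first coordinate of every pair is a quadratic residue and the second a non-residue. In particular $x\neq x\beta$ (because $1\in QR(q)$ while $\beta\notin QR(q)$), so each pair is a genuine two-element set. As $x$ runs over $QR(q)$ the first coordinates exhaust $QR(q)$, while the map $x\mapsto x\beta$ is a bijection $QR(q)\to NQR(q)$, so the second coordinates exhaust $NQR(q)$. Hence the $2k=q-1$ listed elements are distinct and their union is $QR(q)\cup NQR(q)=\mathbb{F}_q^*$.

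Next I would treat the difference condition, which I expect to be the step where the hypothesis $q\equiv3\pmod4$ is essential. The difference attached to $\{x,x\beta\}$ is $x-x\beta=x(1-\beta)$, and since $\beta\neq1$ the factor $1-\beta$ is nonzero. As $x$ ranges over $QR(q)$, the set $\{x(1-\beta)\}$ is a single coset of $QR(q)$, lying entirely in $QR(q)$ or entirely in $NQR(q)$ according to whether $1-\beta$ is a residue. Because $-1\in NQR(q)$, multiplication by $-1$ interchanges the two cosets, so $\{-x(1-\beta):x\in QR(q)\}$ is the complementary coset. Their union is therefore $\mathbb{F}_q^*$, giving $\{\pm(x-x\beta):x\in QR(q)\}=\mathbb{F}_q^*$ as required.

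Finally, for the strong condition I would examine the sums $x+x\beta=x(1+\beta)$. The hypothesis $\beta\neq-1$ guarantees $1+\beta\neq0$, so every sum is nonzero and lies in $\mathbb{F}_q^*$; moreover multiplication by the nonzero constant $1+\beta$ is injective on $QR(q)$, so the $k$ sums are pairwise distinct. This yields $|\{x+x\beta:x\in QR(q)\}|=k$, completing the verification that $S_\beta$ is a strong starter. The only genuinely delicate point is the difference step, and there the whole argument hinges on $-1\in NQR(q)$; the restriction $q\neq3$ merely ensures that $NQR(q)\setminus\{-1\}$ is nonempty, so that a valid choice of $\beta$ exists in the first place.
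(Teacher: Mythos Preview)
Your argument is correct and is the standard verification: the three starter conditions reduce to elementary coset facts once one uses that $-1\in NQR(q)$ when $q\equiv3\pmod4$. There is no proof in the paper to compare against---the proposition is simply quoted from Horton \cite{MR623318} (and \cite{Avila}) without argument---but your write-up would serve perfectly well as a self-contained proof. One tiny stylistic remark: in the difference step you could also observe directly that the $k$ values $x(1-\beta)$ are distinct (injectivity of multiplication by $1-\beta$), so together with their negatives you get $2k=q-1$ distinct nonzero elements; this is exactly what you do implicitly via the coset description.
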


In same paper,\cite{MR623318}, Horton proved the following

\begin{teo}\cite{MR623318}
Let $q\equiv3$ (mod 4) be a prime power ($q\neq3$). If $\beta_1,\beta_2\in NQR(q)\setminus\{-1\}$, with $\beta_1\neq\beta_2$, then $S_{\beta_1}$ and $S_{\beta_2}$ are
	orthogonal. 
\end{teo}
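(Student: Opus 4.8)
The plan is to check the two conditions in the definition of orthogonal starters directly, exploiting the purely multiplicative description of the two starters. By the preceding Proposition both $S_{\beta_1}=\{\{x,x\beta_1\}:x\in QR(q)\}$ and $S_{\beta_2}=\{\{z,z\beta_2\}:z\in QR(q)\}$ are starters for $\mathbb{F}_q$, so the only work is to pair their edges by common differences and then analyse the resulting shifts. The edge $\{x,x\beta_1\}$ has signed difference $x(1-\beta_1)$, so I would introduce the scalar $c=\tfrac{1-\beta_1}{1-\beta_2}$; it is well defined and nonzero because $\beta_1,\beta_2\neq1$, and it satisfies $c\neq1$ exactly because $\beta_1\neq\beta_2$.

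Next I would locate, for every $x\in QR(q)$, the unique edge of $S_{\beta_2}$ carrying the difference $x(1-\beta_1)$. Since $q\equiv3\pmod4$ forces $-1\in NQR(q)$, precisely one of $cx$ and $-cx$ is a quadratic residue, and which one it is depends only on whether $c\in QR(q)$ or $c\in NQR(q)$; in either case multiplication by $\pm c$ is a bijection of $QR(q)$, so the pairing of edges is complete. If $c\in QR(q)$ the matching edge is $\{cx,cx\beta_2\}$, oriented with $u_i=cx$, which yields the shift $u_i-x_i=x(c-1)$; if $c\in NQR(q)$ the matching edge is $\{-cx,-cx\beta_2\}$, oriented with $u_i=-cx\beta_2$, which yields the shift $u_i-x_i=-x(c\beta_2+1)$. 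In either case the shift is a fixed scalar times $x$, so as $x$ ranges over $QR(q)$ the shifts are pairwise distinct and all nonzero if and only if that scalar is nonzero; this is exactly the injectivity condition together with the condition $u_i\neq x_i$.

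It remains to see that the scalar never vanishes. When $c\in QR(q)$ the scalar is $c-1\neq0$, which we already have. When $c\in NQR(q)$ the scalar is $-(c\beta_2+1)$, and a one-line simplification gives $c\beta_2+1=\tfrac{1-\beta_1\beta_2}{1-\beta_2}$, which can vanish only if $\beta_1\beta_2=1$. I expect this exceptional relation to be the one genuinely delicate point, and the way to dispose of it is to observe that $\beta_1\beta_2=1$ would give $c=\tfrac{1-\beta_1}{1-\beta_1^{-1}}=-\beta_1$, a product of the two non-residues $-1$ and $\beta_1$ and hence a quadratic residue, contradicting $c\in NQR(q)$. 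Therefore in the residue case $c\in NQR(q)$ we automatically have $\beta_1\beta_2\neq1$, the scalar is nonzero, and both orthogonality conditions hold; combined with the easy residue case this gives the theorem.
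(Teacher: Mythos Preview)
The paper does not supply its own proof of this theorem; it is quoted from Horton \cite{MR623318} as background, so there is nothing in the paper to compare against. Your argument is correct and complete: introducing the scalar $c=(1-\beta_1)/(1-\beta_2)$, splitting according to whether $c\in QR(q)$ or $c\in NQR(q)$, and in each case exhibiting the shift as a fixed nonzero scalar times $x$ is exactly the natural approach given the multiplicative description of $S_{\beta_1}$ and $S_{\beta_2}$. Your treatment of the only delicate point---the possibility $\beta_1\beta_2=1$ in the non-residue case---is clean, since $\beta_1\beta_2=1$ forces $c=-\beta_1$, a product of two non-residues and hence a residue, contradicting $c\in NQR(q)$.
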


In this note, we prove the following theorem given by Bao and Ji in \cite{Bao}.

\begin{teo}\cite{Bao}\label{thm:inicio}
	Let $q\equiv$3 (mod 4) be an odd prime power with $q\geq11$, then there is a pair of orthogonal totally $C_4$-free one-factorizations  of $K_{q+1}$.
\end{teo}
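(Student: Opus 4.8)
The plan is to build the two one-factorizations from Horton's strong starters. Fix distinct elements $\beta_1,\beta_2\in NQR(q)\setminus\{-1\}$, let $S_{\beta_1},S_{\beta_2}$ be the associated strong starters of $\mathbb{F}_q$, and let $\mathcal{F}=\{F_\gamma:\gamma\in\mathbb{F}_q\}$ and $\mathcal{G}=\{G_\gamma:\gamma\in\mathbb{F}_q\}$ be the one-factorizations of $K_{q+1}$ on the vertex set $\mathbb{F}_q\cup\{\infty\}$ that they generate, via the construction $F_\gamma=F_0+\gamma$ recalled in the introduction. By Horton's orthogonality theorem, $S_{\beta_1}$ and $S_{\beta_2}$ are orthogonal starters, and orthogonal starters generate orthogonal one-factorizations; hence $\mathcal{F}$ and $\mathcal{G}$ are orthogonal. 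It then remains to choose $\beta_1,\beta_2$ so that the pair is totally $C_4$-free.

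Being totally $C_4$-free splits into three statements: $\mathcal{F}$ is $C_4$-free, $\mathcal{G}$ is $C_4$-free, and $F\cup G$ is $C_4$-free for every $F\in\mathcal{F}$, $G\in\mathcal{G}$. In each case I would use the translation structure: a $C_4$ in $F_\gamma\cup F_\delta$ (respectively $F_\gamma\cup G_\delta$) translates by $-\gamma$ to a $C_4$ in $F_0\cup F_{\delta-\gamma}$ (respectively $F_0\cup G_{\delta-\gamma}$), so it suffices to rule out $C_4$'s in $F_0\cup F_\mu$, $G_0\cup G_\mu$ and $F_0\cup G_\mu$ for all $\mu\neq 0$. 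Such a $C_4$ alternates its four edges between the two factors, so I split the analysis according to whether the cycle passes through $\infty$ or lies entirely inside $\mathbb{F}_q$.

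The cycles through $\infty$ are quick: such a $C_4$ must read $\infty-0-c-\mu-\infty$, and expressing the two finite edges as starter pairs while matching quadratic against non-quadratic residues (here the hypothesis $q\equiv 3\pmod 4$, i.e.\ $-1\in NQR(q)$, forces the matching) collapses the existence of the cycle to a single polynomial identity in the $\beta_i$, of the shape $\beta^2-\beta+1=0$ in the within-factorization cases and $\beta_1\beta_2=\beta_1-1$ or $\beta_1\beta_2=\beta_2-1$ in the cross case. The finite $C_4$'s carry the weight of the argument: for four vertices $a,b,c,d\in\mathbb{F}_q$ with $\{a,b\},\{c,d\}$ pairs of the first starter and $\{b,c\},\{d,a\}$ pairs of the second after translating by $-\mu$, I would record that in each starter pair $\{x,x\beta\}$ exactly one entry is a quadratic and one a non-quadratic residue, which again via $-1\in NQR(q)$ sharply restricts the $2^4$ a priori placements. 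Summing the four pair-sums and the four signed pair-differences around the cycle eliminates $\mu$ and the remaining free parameters, reducing each surviving placement to a low-degree polynomial condition on $\beta_1,\beta_2$ together with a quadratic-residue membership condition on an explicit rational function of the $\beta_i$.

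The main obstacle is precisely this finite-cycle casework combined with the final counting. Each of the finitely many placements produces one forbidden relation, so the set of pairs $(\beta_1,\beta_2)$ admitting some $C_4$ is contained in the zero set of a fixed, bounded-degree system: the within-factorization conditions exclude only $O(1)$ values of each $\beta_i$ individually, while the cross conditions exclude, for each admissible $\beta_1$, only boundedly many $\beta_2$. Since $|NQR(q)\setminus\{-1\}|=\tfrac{q-1}{2}-1$, a counting argument shows that for $q\geq 11$ admissible pairs $\beta_1\neq\beta_2$ avoiding every forbidden relation still remain; the bound $q\geq 11$ is exactly what guarantees the residue set is large enough to survive all exclusions, with the few smallest cases verified directly if necessary. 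For any such pair, $\mathcal{F}$ and $\mathcal{G}$ are orthogonal and totally $C_4$-free, which proves the theorem.
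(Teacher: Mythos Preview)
Your reduction scheme---Horton starters, translation to $F_0\cup F_\mu$ and $F_0\cup G_\mu$, splitting into cycles through $\infty$ versus finite cycles, and then case-splitting on which entry of each starter pair is the quadratic residue---is exactly how the paper's Lemmas~\ref{lemma:1_adrian} and~\ref{lemma_Adrian_2} proceed. Where your argument breaks down is the closing paragraph.

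You assert that ``the within-factorization conditions exclude only $O(1)$ values of each $\beta_i$'' and that the cross conditions exclude, for each $\beta_1$, only boundedly many $\beta_2$. That is not what the case analysis actually produces. In each finite-cycle placement the equation~(\ref{equation}) determines $i$ as a rational function of $a$ and $\beta$; substituting back, the consistency constraints $t_1,t_2,t_3\in\{\pm1\}$ become quadratic-residue membership conditions on explicit rational functions of $\beta$ alone (since $a\in QR(q)$ factors out). Such a condition is satisfied by roughly half of all $\beta$, not by $O(1)$ of them, so your counting collapses: a single residue condition already wipes out about $\tfrac{q-1}{4}$ candidates, and with several independent conditions you cannot conclude anything for $q\ge 11$ without a genuine argument (character-sum bounds or cyclotomic numbers). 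Your ``few smallest cases verified directly'' clause then has no defined range.

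The paper avoids this in two ways. First, it does not take two independent Horton starters $S_{\beta_1},S_{\beta_2}$ but the pair $S_\beta$ and $-S_\beta$ for a \emph{single} $\beta$; this halves the parameter space and makes the cross-factorization analysis (Lemma~\ref{lemma_Adrian_2}) parallel to the within-factorization one (Lemma~\ref{lemma:1_adrian}). Second, it carries out the eight-case analysis explicitly and shows that all placements are killed once $\frac{\beta^2+1}{\beta-1}\in QR(q)$ (for the within case) and either $\frac{\beta^2+1}{2}\in QR(q)$ or $\frac{\beta-1}{2}\in QR(q)$ (for the cross case), together with $\beta\notin\{2,2^{-1},-1\}$ and $\beta^3\neq-1$. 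Proving that a $\beta$ satisfying these residue constraints exists for every prime power $q\equiv 3\pmod 4$, $q\ge 11$, is then done via the cyclotomic-number identities of Lemmas~\ref{lemma:1+beta}--\ref{lemma:QR}, not by a cardinality count.
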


The method utilized for constructing such a pair of orthogonal totally $C_4$-free one-factorizations is different than used in \cite{Bao}. Moreover, for the case when $q\equiv3$ (mod 8), our result generalizes Theorem 7 given by Bao and Ji in \cite{Bao}.


\section{Results}\label{sec:main}
In this section, we present some results of orthogonal $C_4$-free one-factorizations of the complete graph. Also, we present the main result of this note.

\begin{lema}\label{lemma:1_adrian}
	Let $q\equiv3$ (mod 4) be an odd prime power and $\beta\in NQR(q)\setminus\{-1\}$. If $\frac{\beta^2+1}{\beta-1}\in QR(q)$, then the one-factorization generated by the starter $S_\beta$ is $C_4$-free. 
\end{lema}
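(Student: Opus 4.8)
The plan is to exploit the translation structure of the starter-induced one-factorization. Writing $F_0=\{\{\infty,0\}\}\cup S_\beta$ and $F_\gamma=F_0+\gamma$, a $4$-cycle in $F_\delta\cup F_{\delta'}$ exists if and only if one exists in $F_0\cup F_\gamma$ with $\gamma=\delta'-\delta$; hence it suffices to prove that $F_0\cup F_\gamma$ is $C_4$-free for every $\gamma\in\mathbb{F}_q^*$. I will argue by contradiction: assuming a $4$-cycle exists, I will show that $\frac{\beta^2+1}{\beta-1}\in NQR(q)$, contrary to the hypothesis. Throughout I write $\chi$ for the quadratic character of $\mathbb{F}_q$, so that $\chi(\beta)=\chi(-1)=-1$ because $q\equiv3\pmod 4$ and $\beta\in NQR(q)$.

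First I would dispose of $4$-cycles through $\infty$. Such a cycle must read $\infty-0-b-\gamma-\infty$, using the edges $\{\infty,0\}\in F_0$ and $\{\infty,\gamma\}\in F_\gamma$ together with an $F_\gamma$-edge at $0$ and an $F_0$-edge at $\gamma$. Expressing $b$ from each of these two edges and equating the two expressions collapses, after cancelling $\gamma\neq0$, to $\beta^2-\beta+1=0$. It then remains to observe that the hypothesis excludes this: if $\beta^2-\beta+1=0$ then $\beta^2+1=\beta$ and $\beta-1=\beta^2$, so $\frac{\beta^2+1}{\beta-1}=\beta^{-1}\in NQR(q)$. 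Thus $\frac{\beta^2+1}{\beta-1}\in QR(q)$ forces $\beta^2-\beta+1\neq0$, and no $4$-cycle meets $\infty$.

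The heart of the argument is a $4$-cycle $a-b-c-d-a$ in $\mathbb{F}_q$ whose edges alternate between $F_0$ and $F_\gamma$. Because $\beta\in NQR(q)$, each $F_0$-edge $\{x,x\beta\}$ has its unique $QR(q)$-endpoint equal to $x$, and each $F_\gamma$-edge has a unique endpoint $w$ with $w-\gamma\in QR(q)$; hence the representatives $s_1,s_2\in QR(q)$ of the two $F_0$-edges and $u_1,u_2\in QR(q)$ of the two $F_\gamma$-edges are \emph{determined} by the residues of $a,b,c,d$. For each admissible residue pattern I would write the four vertex-identification equations, eliminate $s_2,u_1,u_2$ and $\gamma$, and solve the resulting linear relation for $\gamma$ as a scalar multiple of $s_1$. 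In the governing case this yields $\gamma=-s_1\frac{1+\beta}{\beta-1}$, whence $u_1=s_1\frac{\beta^2+1}{\beta(\beta-1)}$, and the requirement $u_1\in QR(q)$ becomes precisely $\chi\!\left(\frac{\beta^2+1}{\beta-1}\right)=-1$; the remaining patterns reduce to the same inequality after applying the dihedral symmetry of the $4$-cycle and the exchange $F_0\leftrightarrow F_\gamma$ (equivalently $\gamma\mapsto-\gamma$). This produces the desired contradiction.

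I expect the main obstacle to be the bookkeeping of the non-$\infty$ case: verifying that over all residue patterns the binding quadratic-residue condition always points in the direction $\frac{\beta^2+1}{\beta-1}\in NQR(q)$, and that the few patterns producing a spurious factor of $2$ are separately infeasible because they demand a $QR(q)$-representative equal to a non-residue (as happens, for instance, when the elimination gives $u_1=-s_1$, which lies in $NQR(q)$). Exploiting the symmetries to collapse the sixteen a priori orientations to a few essentially distinct cases is what will keep this computation manageable.
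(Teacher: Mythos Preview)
Your proposal is correct and follows essentially the same approach as the paper: reduce by translation to $F_0\cup F_\gamma$, handle the $\infty$-case by deriving $\beta^2-\beta+1=0$ (your observation that this forces $\frac{\beta^2+1}{\beta-1}=\beta^{-1}\in NQR(q)$ is slicker than the paper's case split), and then enumerate the non-$\infty$ residue patterns. The paper does not invoke symmetry to collapse cases but simply runs through all eight sign patterns $(t_1,t_2,t_3)\in\{\pm1\}^3$, finding that three give immediate algebraic contradictions such as $(\beta-1)^2=0$, one (your $u_1=-s_1$ scenario) fails directly because $-1\in NQR(q)$, and the remaining four each reduce to $\frac{\beta^2+1}{\beta-1}\in NQR(q)$ exactly as you anticipate.
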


\begin{proof}
	Let $F_i=\{\{\infty,i\}\}\cup\{\{x+i,x\beta+i\}: x\in QR(q)\}$, for $i\in\mathbb{F}_q$. Then
	$\mathcal{F}=\{F_i:i\in\mathbb{F}_q\}$ is a one-factorization of the complete graph on $\mathbb{F}_q\cup \{\infty\}$. Assume  this one-factorization is not $C_4$-free. Since $F_i=F_0+i$,then without loss of generality, it is sufficient to prove $F_0\cup F_i$, for some $i\in\mathbb{F}_q$, contain a cycle of length 4, $C_4$. 
	
	First, suppose that $\infty\in V(C_4)$. Hence $\{\infty,0\},\{\infty,i\}\in E(C_4)$, where $\{\infty,0\}\in F_0$ and $\{\infty,i\}\in F_i$. We have $\{-i+i,-i\beta^{t_1}+i\},\{i,i\beta^{t_2}\}\in E(C_4)$, with $t_1=1$ if $-i\in QR(q)$, and $t_1=-1$ if $-i\in NQR(q)$, and $t_2=1$ if $i\in QR(q)$, and $t_2=-1$ if $i\in NQR(q)$. Since $F_0\cup F_i$ contain a cycle of length 4, then $\{\infty,0\}\{0,-i\beta^{t_1}+i\}\{i,i\beta^{t_2}\}\{\infty,i\}=C_4$. Therefore
	\begin{equation*}
	\beta^{t_2}+\beta^{t_1}-1=0, \mbox{where }t_1,t_2\in\{-1,1\}
	\end{equation*}
	Since $-i^2\in NQR(q)$, then $t_1\neq t_2$, which implies that $\beta^2-\beta+1=0$. If $\beta^2+1\in NQR(q)$, then $\beta-1\in NQR(q)$. Therefore, $\beta^2-\beta+1=0$ implies $\beta(\beta-1)=-1$, which is a contradiction, since $\beta(\beta-1)\in QR(q)$. On the other hand, if $\beta^2+1\in QR(q)$ then $\beta-1\in QR(q)$, hence $\beta^2-\beta+1=0$ implies $\beta^2+1=\beta$, which is a contradiction, since $\beta\in NQR(q)$.
	
	Now, suppose that $\infty\not\in V(C_4)$. Since $E(C_4)\cap F_0\neq\emptyset$, then without loss of generality there is $a\in V(C_4)\cap QR(q)$ such that
	\begin{center}
		$\{a,a\beta\}\in F_0$ and $\{a\beta,(a\beta-i)\beta^{t_1}+i\}\in F_i$,
	\end{center}
	where $t_1=1$ if $a\beta-i\in QR(q)$, and $t_1=-1$ if $a\beta-i\in NQR(q)$. On the other hand
	\begin{center}
		$\{a,(a-i)\beta^{t_2}+i\}\in F_i$ and $\{(a-i)\beta^{t_2}+i,((a-i)\beta^{t_2}+i)\beta^{t_3}\}\in F_0$,	
	\end{center}
	where $t_2=1$ if $a-i\in QR(q)$, and $t_2=-1$ if $a-i\in NQR(q)$, and $t_3=1$ if $(a-i)\beta^{t_2}+i\in QR(q)$, and $t_3=-1$ if $(a-i)\beta^{t_2}+i\in NQR(q)$. Therefore, we have
	\begin{equation}\label{equation}
	a(\beta^{t_1+1}-\beta^{t_2+t_3})+i(\beta^{t_2+t_3}-\beta^{t_3}-\beta^{t_1}+1)=0,
	\end{equation}
	for all $t_1,t_2,t_3\in\{-1,1\}$.
	\begin{itemize}
		\item [case (i)] Suppose $t_1=t_2=t_3=1$. In this case $\beta^2-2\beta+1=0$, which is a contradiction.
		\item [case (ii)] Suppose $t_1=t_2=1$ and $t_3=-1$. In this case $i=\frac{a\beta(\beta+1)}{\beta-1}$. Since $a-i\in QR(q)$, we have $a-i=-a\frac{\beta^2+1}{\beta-1}\in NQR(q)$, which is a contradiction, since $\frac{\beta^2+1}{\beta-1}\in QR(q)$ and $-a\in NQR(q)$.
		\item [case (iii)] Suppose $t_1=t_3=1$ and $t_2=-1$. In this case $i=\frac{a(\beta+1)}{2}$. We have $\frac{(a-i)\beta^{-1}+i}{a\beta-i}\in QR(q)$, since
		$a\beta-i\in QR(q)$ and $(a-i)\beta^{-1}+i\in QR(q)$. However $\frac{(a-i)\beta^{-1}+i}{a\beta-i}=\beta^{-1}\frac{\beta^2+1}{\beta-1}\in NQR(q)$, a contradiction. 
		\item [case (iv)] Suppose $t_2=t_3=1$ and $t_1=-1$. In this case $i=a\beta\frac{(\beta+1)}{\beta^2+1}$. Since $a-i\in QR(q)$, we have $a-i=-a\frac{\beta-1}{\beta^2+1}\in NRQ(q)$, which is a contradiction. 
		\item [case (v)] Suppose $t_1=1$ and $t_2=t_3=-1$. In this case $i=a(\beta+1)$. Since $a-i\in NQR(q)$, we have $a-i=-a\beta\in QR(q)$, a contradiction.
		\item [case (vi)] Suppose $t_2=1$ and $t_1=t_3=-1$. In this case $\beta-1=0$, which is a contradiction.
		\item [case (vii)] Suppose $t_3=1$ and $t_1=t_2=-1$. In this case $\beta^2-2\beta+1=0$, which is a contradiction.
		\item [case (viii)] Suppose $t_1=t_2=t_3=-1$. In this case $i=-a\frac{\beta+1}{\beta-1}$. Since $a\beta-i\in NQR(q)$, we have $a\beta-i=a\frac{\beta^2+1}{\beta-1}\in QR(q)$, a contradiction.
	\end{itemize} 
\end{proof}

\begin{example}
	For $q=19$, if $\beta\in\{2,3,10,13\}$, then $\frac{\beta^2+1}{\beta-1}\in QR(q)$. It is not difficult to check the one-factorization generated by the starter $S_\beta$ is $C_4$-free. 
\end{example}

\begin{lema}\label{lemma_Adrian_2}
	Let $q\equiv3$ (mod 4) be an odd prime power and $\beta\in NQR(q)\setminus\{-1\}$. If either $\frac{\beta^2+1}{2}\in QR(q)$ or $\frac{\beta-1}{2}\in QR(q)$, with $\beta\not\in\{2,\frac{1}{2}\}$, then the pair of orthogonal one-factorizations generated by the starter $S_\beta$ and $S_{-\beta}$ is $C_4$-free. 
\end{lema}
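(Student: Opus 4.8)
The plan is to mirror the proof of Lemma~\ref{lemma:1_adrian}, now analysing unions that cross the two factorizations. Write $\mathcal F=\{F_i\}$ for the one-factorization generated by $S_\beta$, with $F_i=\{\{\infty,i\}\}\cup\{\{x+i,x\beta+i\}:x\in QR(q)\}$, and $\mathcal G=\{G_j\}$ for the one generated by the companion starter $S_{-\beta}$; since $-\beta=(-1)\beta\in QR(q)$, this companion is the starter $-S_\beta=\{\{y,y\beta\}:y\in NQR(q)\}$, which is orthogonal to $S_\beta$ by Horton's theorem that $S$, $-S$ and $P$ are pairwise orthogonal, and yields $G_j=\{\{\infty,j\}\}\cup\{\{y+j,y\beta+j\}:y\in NQR(q)\}$. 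By the definition of a $C_4$-free pair only the cross unions matter, and since $F_i\cup G_j=(F_0\cup G_{j-i})+i$, it suffices to prove $F_0\cup G_j$ is $C_4$-free for each $j\in\mathbb F_q$. The bookkeeping device is that the $F_0$-partner of $a\neq\infty,0$ is $a\beta^{\epsilon}$ with $\epsilon=1$ iff $a\in QR(q)$, while the $G_j$-partner of $z\neq\infty,j$ is $j+(z-j)\beta^{\delta}$ with $\delta=1$ iff $z-j\in NQR(q)$; the reversed role of residues in $\mathcal G$ is exactly what will produce the conditions on $\tfrac{\beta-1}{2}$ and $\tfrac{\beta^2+1}{2}$ in place of the $\tfrac{\beta^2+1}{\beta-1}$ condition of Lemma~\ref{lemma:1_adrian}.

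First I would dispose of a $C_4$ through $\infty$. Its two edges at $\infty$ are $\{\infty,0\}\in F_0$ and $\{\infty,j\}\in G_j$, and the other two meet at a vertex $w$ with $\{0,w\}\in G_j$ and $\{w,j\}\in F_0$. Evaluating $w$ through both partner rules gives $j(1-\beta^{\delta})=j\beta^{\epsilon}$; but both exponents are governed by whether $j\in QR(q)$, so $\delta=\epsilon=:t$ and the relation collapses to $2\beta^{t}=1$, forcing $\beta=\tfrac12$ ($t=1$) or $\beta=2$ ($t=-1$). Both are excluded by hypothesis, so no $C_4$ meets $\infty$.

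For $\infty\notin V(C_4)$, a $4$-cycle in the union of two perfect matchings alternates between them, so it reads $\{a,b\}\in F_0$, $\{b,c\}\in G_j$, $\{c,d\}\in F_0$, $\{d,a\}\in G_j$. Every $F_0$-edge has a $QR(q)$ endpoint, so I may take $a\in QR(q)$ and $b=a\beta$; expressing $c,d$ by the partner rules and closing the cycle yields the cross analogue of \eqref{equation},
\begin{equation*}
a\left(\beta^{1+\delta_1+\epsilon_2}-\beta^{\delta_2}\right)+j\left(\beta^{\epsilon_2}-\beta^{\delta_1+\epsilon_2}-1+\beta^{\delta_2}\right)=0,
\end{equation*}
with $\delta_1,\epsilon_2,\delta_2\in\{-1,1\}$. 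Four of the eight sign patterns are impossible unconditionally: one collapses to $c=a$, and three force $j=0$ and then clash with $a\in QR(q)$ (since they would demand $a$ or $a\beta$ to be a residue of the wrong type). Each of the remaining four patterns yields a genuine, non-degenerate value $j=j(a)$ — such as $\tfrac{a\beta(\beta+1)}{\beta-1}$ or $\tfrac{a(\beta+1)}{2}$ — and substituting back expresses $a\beta-j$, $c$ and $a-j$ as explicit scalar multiples of $a$. Reading off their residues shows each of these four is consistent only when simultaneously $\tfrac{\beta-1}{2}\in NQR(q)$ and $\tfrac{\beta^2+1}{2}\in NQR(q)$. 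Hence assuming $\tfrac{\beta^2+1}{2}\in QR(q)$ or $\tfrac{\beta-1}{2}\in QR(q)$ breaks at least one required condition and eliminates all four, so $F_0\cup G_j$ is $C_4$-free.

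I expect the main work to be the disciplined bookkeeping of the eight sign patterns: solving the displayed relation for $j$, back-substituting, and translating each defining condition $\delta_1,\epsilon_2,\delta_2=\pm1$ into a precise residue statement via $-1,\beta\in NQR(q)$ and $-\beta\in QR(q)$. The delicate point is showing that the ``either/or'' in the hypothesis is exactly what is needed — that all four dangerous patterns demand both $\tfrac{\beta-1}{2}$ and $\tfrac{\beta^2+1}{2}$ to be non-residues, so that forcing either into $QR(q)$ suffices — and, in parallel, confirming that the four remaining patterns and the infinity case are killed outright by degeneracy and by $\beta\in NQR(q)\setminus\{-1\}$ with $\beta\notin\{2,\tfrac12\}$.
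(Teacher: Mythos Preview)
Your proposal is correct and follows essentially the same route as the paper: reduce to $F_0\cup G_j$, kill the $C_4$ through $\infty$ via $2\beta^{t}=1$, and then run the eight sign patterns of the same cycle equation (your $(\delta_1,\epsilon_2,\delta_2)$ corresponds to the paper's $(t_1,-t_3,t_2)$, so the two displayed equations differ only by a factor of $\beta^{t_3}$). Your packaging --- that each of the four non-degenerate patterns simultaneously forces $\tfrac{\beta-1}{2}\in NQR(q)$ \emph{and} $\tfrac{\beta^2+1}{2}\in NQR(q)$ --- is a tidy restatement of what the paper does with separate ``on the other hand'' clauses in each of those four cases.
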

\begin{proof}
	Let $F_i=\{\{\infty,i\}\}\cup\{\{x+i,x\beta+i\}: x\in QR(q)\}$ and $G_i=\{\{\infty,i\}\}\cup\{\{-x+i,-x\beta+i\}: x\in QR(q)\}$, for $i\in\mathbb{F}_q$. Then
	$\mathcal{F}=\{F_i:i\in\mathbb{F}_q\}$ and $\mathcal{G}=\{G_i:i\in\mathbb{F}_q\}$ are orthogonal one-factorizations of the complete graph on $\mathbb{F}_q \cup\{\infty\}$. Assume this pair of one-factorizations is not $C_4$-free. Since $F_i=F_0+i$ and $G_i=G_0+i$, then without loss of generality we can suppose $F_0\cup G_i$, for some $i\in\mathbb{F}_q$, contain a cycle of length 4, $C_4$. 
	
	First, suppose that $\infty\in V(C_4)$. Hence, $\{\infty,0\},\{\infty,i\}\in E(C_4)$, where $\{\infty,0\}\in F_0$ and $\{\infty,i\}\in G_i$. Therefore, we have   $\{0,-i\beta^{t_1}+i\},\{i,i\beta^{t_2}\}\in E(C_4)$, with $t_1=t_2=1$ if $i\in QR(q)$, and $t_1=t_2=-1$ if $i\in NQR(q)$. Since $F_0\cup F_i$ contain a cycle of length 4, then $\{\infty,0\}\{0,-i\beta^{t_1}+i\}\{i,i\beta^{t_2}\}\{\infty,i\}=C_4$. Hence
	\begin{equation*}
	\beta^{t_2}+\beta^{t_1}-1=0, \mbox{where }t_1,t_2\in\{-1,1\}\mbox{ with $t_1=t_2=t$},
	\end{equation*}
	which implies that $2\beta^t=1$, which is a contradiction if $\beta\not\in\{2,\frac{1}{2}\}$.
	
	Now, suppose $\infty\not\in V(C_4)$. Since $-1\in NQR(q)$, then $NQR(q)=-QR(q)$. Therefore, $G_i=\{\{y+i,y\beta+i\}:y\in NQR(q)\}$. Since $E(C_4)\cap F_0\neq\emptyset$, without loss of generality there is $a\in V(C_4)\cap QR(q)$ such that
	\begin{center}
		$\{a,a\beta\}\in F_0$ and $\{a\beta,(a\beta-i)\beta^{t_1}+i\}\in G_i$,
	\end{center}
	where $t_1=1$ if $a\beta-i\in NQR(q)$, and $t_1=-1$ if $a\beta-i\in QR(q)$. 
	
	On the other hand
	\begin{center}
		$\{a,(a-i)\beta^{t_2}+i\}\in G_i$ and $\{(a-i)\beta^{t_2}+i,((a-i)\beta^{t_2}+i)\beta^{t_3}\}\in F_0$,	
	\end{center}
	where $t_2=1$ if $a-i\in NQR(q)$, and $t_2=-1$ if $a-i\in QR(q)$, and $t_3=1$ if $(a-i)\beta^{t_2}+i\in QR(q)$, and $t_3=-1$ if $(a-i)\beta^{t_2}+i\in NQR(q)$.
	
	Therefore, we have
	\begin{equation}
	a(\beta^{t_1+1}-\beta^{t_2+t_3})+i(\beta^{t_2+t_3}-\beta^{t_3}-\beta^{t_1}+1)=0,
	\end{equation}
	for all $t_1,t_2,t_3\in\{-1,1\}$, which is an analogous equation obtained in the proof of Lemma \ref{lemma:1_adrian}.
	
	\begin{itemize}
		\item [case (i)] Suppose $t_1=t_2=t_3=1$. In this case $\beta^2-2\beta+1=0$, which is a contradiction.
		
		\item [case (ii)] Suppose $t_1=t_2=1$ and $t_3=-1$. In this case $i=a\frac{\beta(\beta+1)}{\beta-1}$. We have that $a\beta-i\in NQR(q)$, but  $a\beta-i=-a\beta\frac{2}{\beta-1}\in QR(q)$, since $-a\beta\in QR(q)$, which is a contradiction. On the other hand, since $a-i\in NQR(q)$ and $a\beta-i\in NQR(q)$, then $\frac{a-i}{a\beta-i}\in QR(q)$, but $\frac{a-i}{a\beta-i}=\frac{\beta^2+1}{2\beta}\in NQR(q)$, which is a contradiction.
		
		\item [case (iii)] Suppose $t_1=t_3=1$ and $t_2=-1$. In this case $i=a\frac{\beta+1}{2}$. We have $a\beta-i\in NQR(q)$, but $a\beta-i=a\frac{\beta-1}{2}\in QR(q)$, which is a contradiction. On the other hand, we have $(a-i)\beta^{-1}+i\in QR(q)$, but $(a-i)\beta^{-1}+i=\frac{a(\beta^2+1)}{2\beta}\in NQR(q)$, which is a contradiction, since $\frac{a}{\beta}\in NQR(q)$.
		
		\item [case (iv)] Suppose $t_2=t_3=1$ and $t_1=-1$. In this case $i=a\beta\frac{(\beta+1)}{\beta^2+1}$. Since $(a-i)\beta+i\in QR(q)$, we have $(a-i)\beta+i=a\beta\frac{2}{\beta^2+1}\in NRQ(q)$, which is a contradiction. On the other hand, since $a-i\in NQR(q)$ and $(a-i)\beta+i\in QR(q)$, then $\frac{a-i}{(a-i)\beta+i}\in NQR(q)$, but $\frac{a-i}{(a-i)\beta+i}=-\frac{\beta-1}{2\beta}\in QR(q)$, which is a contradiction, since $-\beta^{-1}\in QR(q)$. 
		
		\item [case (v)] Suppose $t_1=1$ and $t_2=t_3=-1$. In this case $i=a(\beta+1)$. If there is a cycle of lenght 4, then $a\neq (a\beta-i)\beta+i=((a-i)\beta^{-1}+i)\beta^{-1}$, but $(a\beta-i)\beta+i=a=((a-i)\beta^{-1}+i)\beta^{-1}$, which is a contradiction.
		
		\item [case (vi)] Suppose $t_2=1$ and $t_1=t_3=-1$. In this case $\beta-1=0$, which is a contradiction.
		
		\item [case (vii)] Suppose $t_3=1$ and $t_1=t_2=-1$. In this case $\beta^2-2\beta+1=0$, which is a contradiction.
		
		\item [case (viii)] Suppose $t_1=t_2=t_3=-1$. In this case $i=-a\frac{\beta+1}{\beta-1}$. We have $a-i\in QR(q)$, but $a-i=a\beta\frac{2}{\beta-1}\in NQR(q)$, wich is a contradiction. On the other hand, since $a\beta-i\in QR(q)$ and $a-i\in QR(q)$, then $\frac{a\beta-i}{a-i}\in QR(q)$, but $\frac{a\beta-i}{a-i}=\frac{\beta^2+1}{2\beta}\in NQR(q)$, which is a contradiction.
	\end{itemize} 
\end{proof}

\subsection{Main result}

Let $q=ef+1$ be a prime power and let $H$ be the subgroup of $\mathbb{F}_q^*$ of order $f$ with $\{H=C_0,\ldots,C_{e-1}\}$ the set of (multiplicative) cosets of $H$ in $\mathbb{F}_q^*$ (that is, $C_i = g^iC_0$, where $g$ is the least primitive element of $\mathbb{F}_q$). The \emph{cyclotomic number}
$(i,j)$ is defined as $|\{x\in C_i: x+1\in C_j\}|$. In particular, if $e=2$ and $f$ is odd, then $(0,0)=\frac{f-1}{2}$, $(0,1)=\frac{f+1}{2}$, $(1,0)=\frac{f-1}{2}$ and $(1,1)=\frac{f-1}{2}$, see \cite{book:206537}, Table VII.8.50. Hence, if $C_0=QR(q)$ and $C_1=NQR(q)$ are the cosets of $QR(q)$ in $\mathbb{F}_q^*$, then the following is satisfied:

\begin{lema}\label{lemma:1+beta}
	Let $p\equiv3$ (mod 4) be an odd prime power with $p\neq3$. Then, there exists $\beta_1,\beta_2\in NQR(q)$ such that 
	\begin{enumerate}
		\item $(\beta_1+1)\in NQR(q)$ and $(\beta_2+1)\in QR(q)$.
		\item $(\beta_1-1)\in NQR(q)$ and $(\beta_2-1)\in QR(q)$.
	\end{enumerate}
\end{lema}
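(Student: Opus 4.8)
The plan is to turn each of the required conditions into a sign pattern for the quadratic character and then count, by a character-sum computation, the number of $\beta$ realizing that pattern. Let $\chi$ be the quadratic character of $\mathbb{F}_q$, so $\chi(x)=1$ on $QR(q)$, $\chi(x)=-1$ on $NQR(q)$ and $\chi(0)=0$, and recall that $q\equiv3\pmod4$ gives $\chi(-1)=-1$. For $\epsilon_0,\epsilon_1,\epsilon_2\in\{-1,1\}$ let $N(\epsilon_0,\epsilon_1,\epsilon_2)$ be the number of $\beta\in\mathbb{F}_q\setminus\{0,1,-1\}$ with $\chi(\beta)=\epsilon_0$, $\chi(\beta+1)=\epsilon_1$ and $\chi(\beta-1)=\epsilon_2$. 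Then $\beta_1$ exists as soon as $N(-1,-1,-1)>0$ and $\beta_2$ exists as soon as $N(-1,1,1)>0$, so it suffices to prove these two positivity statements.

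The first step is to write each count as a character sum via the coset indicator $\tfrac12(1+\epsilon\chi(\,\cdot\,))$:
\[
8\,N(\epsilon_0,\epsilon_1,\epsilon_2)=\!\!\sum_{\beta\neq 0,\pm1}\!\!\bigl(1+\epsilon_0\chi(\beta)\bigr)\bigl(1+\epsilon_1\chi(\beta+1)\bigr)\bigl(1+\epsilon_2\chi(\beta-1)\bigr).
\]
Expanding the product, $8N$ becomes the main term $q-3$ plus single sums $\sum\chi(\beta+a)$, pairwise sums $\sum\chi\bigl((\beta+a)(\beta+b)\bigr)$ with $a\neq b$, and the single cubic sum $S=\sum_{\beta}\chi\bigl((\beta-1)\beta(\beta+1)\bigr)$. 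The single sums reduce to $\sum_{x}\chi(x)=0$ after accounting for the excluded points, where only $\chi(\pm1)$ and $\chi(\pm2)$ survive; here $\chi(-2)=-\chi(2)$ because $\chi(-1)=-1$. Each pairwise sum equals $-1$ by the classical evaluation $\sum_{x\in\mathbb{F}_q}\chi\bigl((x+a)(x+b)\bigr)=-1$ for $a\neq b$, again corrected at $0,\pm1$; these are exactly the order-$2$ cyclotomic counts recorded before the lemma.

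The decisive step is the cubic sum $S=\sum_{\beta\in\mathbb{F}_q}\chi(\beta^{3}-\beta)$. Replacing $\beta$ by $-\beta$ and using $\chi(-1)=-1$ gives $\chi\bigl((-\beta)^3-(-\beta)\bigr)=\chi\bigl(-(\beta^3-\beta)\bigr)=-\chi(\beta^3-\beta)$, whence $S=-S$ and therefore $S=0$. This exact vanishing, forced by $q\equiv3\pmod4$, is the crux of the argument: it replaces the generic Weil bound $|S|\le2\sqrt q$ by an identity and lets the counts be written in closed form. Collecting everything, all remaining dependence is through $\chi(2)$, and for $\epsilon_0=-1$ the formula collapses to
\[
8\,N(-1,\epsilon_1,\epsilon_2)=(q-3)+2\,\epsilon_2\bigl(1+\chi(2)\bigr),
\]
independent of $\epsilon_1$.

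From this, $N(-1,1,1)=\tfrac18\bigl((q-3)+2(1+\chi(2))\bigr)\ge\tfrac{q-3}{8}>0$, which produces $\beta_2$. For $\beta_1$, $N(-1,-1,-1)=\tfrac18\bigl((q-3)-2(1+\chi(2))\bigr)$ equals $\tfrac{q-3}{8}$ if $\chi(2)=-1$ and $\tfrac{q-7}{8}$ if $\chi(2)=1$; in either case it is positive once $q\ge11$. I expect the main obstacle to be precisely the evaluation of $S$, which the $\beta\mapsto-\beta$ symmetry resolves; the only other delicate point is the bookkeeping of the excluded points $0,\pm1$ together with the sign $\chi(2)$, which is also where the lower bound on $q$ enters (the $\beta_1$-count degenerates to $0$ at $q=7$, so the hypothesis matches the range $q\ge11$ of Theorem \ref{thm:inicio}).
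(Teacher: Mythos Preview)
Your character-sum computation is correct, and the vanishing $S=\sum_\beta\chi(\beta^3-\beta)=0$ via $\beta\mapsto-\beta$ is a clean way to close it. However, you have read the lemma in its strong form---a \emph{single} $\beta_1$ (resp.\ $\beta_2$) satisfying the condition in both items simultaneously---whereas the paper intends the weak form in which items~1 and~2 may use different witnesses. Your own computation shows why: for $q=7$ one has $\chi(2)=1$ and $N(-1,-1,-1)=\tfrac{q-7}{8}=0$, and indeed $NQR(7)=\{3,5,6\}$ contains no element with both neighbours non-residues. Since the lemma is stated for all $q\neq3$, the strong reading cannot be what is meant, and your restriction to $q\ge11$ is a symptom of having proved more than was asked.

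The paper gives no separate proof; the lemma is presented as an immediate consequence of the order-$2$ cyclotomic numbers recorded just before it. With $q=2f+1$ and $f$ odd, $(1,1)=(1,0)=\tfrac{f-1}{2}$ and $(0,1)=\tfrac{f+1}{2}$ are all positive once $q\ge7$, which says exactly that some $\beta\in NQR(q)$ has $\beta+1\in NQR(q)$, some has $\beta+1\in QR(q)$, and (after the shift $\gamma=\beta-1$) likewise for $\beta-1$. This is precisely the two-factor piece of your expansion, so your method specialises to the paper's argument once you drop the third factor $\bigl(1+\epsilon_2\chi(\beta-1)\bigr)$; the resulting two-term counts are the cyclotomic numbers and cover $q=7$ as well. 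What your triple-sum computation buys beyond the lemma as stated is the stronger existence for $q\ge11$, which in fact yields Lemma~\ref{lemma:NQR} directly, bypassing the maximal-run argument used there.
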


Which implies too that:

\begin{lema}\label{lemma:1+alpha}
	Let $p\equiv3$ (mod 4) be an odd prime power with $p\neq3$. Then there are $\alpha_1,\alpha_2\in QR(q)$ such that 
	\begin{enumerate}
		\item $(\alpha_1+1)\in NQR(q)$ and $(\alpha_2+1)\in QR(q)$.
		\item $(\alpha_1-1)\in NQR(q)$ and $(\alpha_2-1)\in QR(q)$.
	\end{enumerate}
\end{lema}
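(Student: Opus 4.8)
The plan is to prove Lemma~\ref{lemma:1+alpha} by deducing it directly from Lemma~\ref{lemma:1+beta} through a change of variables that converts non-quadratic residues into quadratic residues while preserving the additive shift by $\pm1$. The key observation is that multiplying by a fixed non-quadratic residue interchanges $QR(q)$ and $NQR(q)$, but such a multiplication does not respect the operation $x\mapsto x\pm1$. Instead I would look for a single involution-like substitution that sends a putative $\beta$ (with the properties of Lemma~\ref{lemma:1+beta}) to an $\alpha\in QR(q)$ while controlling $\alpha\pm1$.

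\medskip

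First I would record the arithmetic facts from the cyclotomic-number table stated just before Lemma~\ref{lemma:1+beta}. Since $q\equiv3\pmod 4$ we have $-1\in NQR(q)$, so $NQR(q)=-QR(q)$ and negation swaps the two cosets. This is the crucial device: if $\beta\in NQR(q)$ then $\alpha:=-\beta\in QR(q)$, and moreover $\alpha+1=-(\beta-1)$ and $\alpha-1=-(\beta+1)$. Because negation swaps residues, $(\alpha+1)=-(\beta-1)$ lies in the coset opposite to that of $\beta-1$, and likewise $(\alpha-1)=-(\beta+1)$ lies in the coset opposite to that of $\beta+1$. Thus setting $\alpha_1:=-\beta_1$ and $\alpha_2:=-\beta_2$ translates the four membership conditions of Lemma~\ref{lemma:1+beta} into the four conditions required in Lemma~\ref{lemma:1+alpha}, after checking how the swaps line up.

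\medskip

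Concretely, I would carry out the following steps in order. Begin by invoking Lemma~\ref{lemma:1+beta} to obtain $\beta_1,\beta_2\in NQR(q)$ with $\beta_1+1,\beta_1-1\in NQR(q)$ and $\beta_2+1,\beta_2-1\in QR(q)$. Set $\alpha_1:=-\beta_2$ and $\alpha_2:=-\beta_1$ (the reindexing is chosen so that the quadratic-residue side matches). Then $\alpha_1=-\beta_2\in QR(q)$ and $\alpha_2=-\beta_1\in QR(q)$ since negation swaps cosets. For the shifts, use $\alpha_1+1=-(\beta_2-1)\in NQR(q)$ and $\alpha_1-1=-(\beta_2+1)\in NQR(q)$, which gives condition~1 and~2 for $\alpha_1$; and $\alpha_2+1=-(\beta_1-1)\in QR(q)$ and $\alpha_2-1=-(\beta_1+1)\in QR(q)$, giving the corresponding statements for $\alpha_2$. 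Each membership claim follows from the single fact that $x\in QR(q)\iff -x\in NQR(q)$ when $q\equiv3\pmod 4$.

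\medskip

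I expect the only real obstacle to be bookkeeping: making sure the indices $1,2$ are assigned so that the $\beta$-conditions map onto exactly the $\alpha$-conditions as stated, since the negation simultaneously swaps the residue class \emph{and} interchanges the roles of $+1$ and $-1$ (because $-(\beta-1)=\alpha+1$ rather than $\alpha-1$). There is no deep content beyond this sign analysis; in particular no new appeal to the cyclotomic numbers is needed, as Lemma~\ref{lemma:1+beta} already encodes the required existence, and the passage from $NQR$ to $QR$ is a purely formal consequence of $-1\in NQR(q)$. The hypothesis $q\neq3$ is inherited directly from Lemma~\ref{lemma:1+beta} and guarantees the relevant cosets are nonempty, so no separate verification is required here.
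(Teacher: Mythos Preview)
Your negation trick is correct and is essentially the implication the paper has in mind: the paper gives no separate proof of Lemma~\ref{lemma:1+alpha}, only the phrase ``Which implies too that:'' immediately after Lemma~\ref{lemma:1+beta}. One could also read the paper as pointing directly to the cyclotomic-number table (the entries $(0,0)$ and $(0,1)$ are already listed and cover the $QR$ case without passing through $NQR$), but your reduction via $\alpha=-\beta$ is an equally valid and arguably cleaner way to make the ``implies'' explicit.

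One caveat worth flagging: you read Lemma~\ref{lemma:1+beta} as supplying a \emph{single} $\beta_1\in NQR(q)$ with both $\beta_1+1\in NQR(q)$ and $\beta_1-1\in NQR(q)$ (and likewise a single $\beta_2$). That stronger reading is not what the cyclotomic numbers give, and in fact it fails for $q=7$, where $NQR(7)=\{3,5,6\}$ contains no element with both neighbours non-residues. The lemma (and Lemma~\ref{lemma:1+alpha}) should be read as four separate existence statements, items~1 and~2 being independent. Your argument survives this intact: simply apply the map $\beta\mapsto-\beta$ to each of the four $\beta$'s furnished by Lemma~\ref{lemma:1+beta} separately, using the identities $-\beta+1=-(\beta-1)$ and $-\beta-1=-(\beta+1)$ exactly as you wrote. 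So this is a phrasing issue, not a gap in the method.
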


The following Lemma \ref{lemma:NQR} is analogous then given in \cite{Avila}, and we give the proof given in the same paper.

\begin{lema}\label{lemma:NQR}
	Let $p\equiv3$ (mod 4) be an odd prime power with $p\neq3$. Then there is $\beta\in NQR(q)$ such that $(\beta+1)(\beta-1)\in NQR(q)$.
\end{lema}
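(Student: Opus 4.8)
The plan is to reduce the statement to the evaluation of a single cyclotomic number. The first move is to factor the target quantity as $(\beta+1)(\beta-1)=\beta^2-1$, so that it suffices to produce $\beta\in NQR(q)$ with $\beta^2-1\in NQR(q)$; note that $\beta\neq\pm1$ is then automatic, since $\beta^2-1\in NQR(q)$ forces $\beta^2-1\neq0$.

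The key structural observation is that squaring is a bijection $NQR(q)\to QR(q)$. Indeed, because $q\equiv3$ (mod $4$) we have $-1\in NQR(q)$, so for every $\beta$ the elements $\beta$ and $-\beta$ lie in different cosets of $QR(q)$; hence $\beta_1^2=\beta_2^2$ with $\beta_1,\beta_2\in NQR(q)$ forces $\beta_1=\beta_2$. As $|NQR(q)|=|QR(q)|=f=\frac{q-1}{2}$ and squares are quadratic residues, the map $\beta\mapsto\beta^2$ is a bijection from $NQR(q)$ onto $QR(q)$. Consequently, writing $C_0=QR(q)$ and $C_1=NQR(q)$,
\[
\bigl|\{\beta\in NQR(q):\beta^2-1\in NQR(q)\}\bigr|
=\bigl|\{\delta\in QR(q):\delta-1\in NQR(q)\}\bigr|.
\]

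I would then translate the right-hand count into a cyclotomic number via the substitution $x=\delta-1$: the condition ``$\delta\in C_0$ and $\delta-1\in C_1$'' becomes ``$x\in C_1$ and $x+1\in C_0$'', so the count equals $(1,0)$. By the cyclotomic data recorded before the lemma (the case $e=2$, $f$ odd), $(1,0)=\frac{f-1}{2}$. Since $q\neq3$ gives $f=\frac{q-1}{2}\geq3$, we obtain $(1,0)=\frac{f-1}{2}\geq1$, which furnishes the required $\beta\in NQR(q)$ with $(\beta+1)(\beta-1)\in NQR(q)$.

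I do not anticipate a genuine obstacle: the two delicate points are the bijectivity of squaring on $NQR(q)$ (which is precisely where the hypothesis $q\equiv3$ (mod $4$) is used) and the bookkeeping that in the cyclotomic count neither $x$ nor $x+1$ vanishes, which holds because $\delta-1\in NQR(q)$ and $\delta\in QR(q)$ are both nonzero. As a robustness check one could instead argue through the quadratic character $\chi$, expanding $N=\frac14\sum_{\beta\neq0,\pm1}(1-\chi(\beta))\bigl(1-\chi(\beta^2-1)\bigr)$; the two linear character sums vanish, the cross term $\sum_\beta\chi(\beta^3-\beta)$ is bounded by $2\sqrt{q}$ by the Weil estimate, and one gets $N\geq\frac{q-3-2\sqrt q}{4}>0$ for $q>9$, with $q=7$ checked by hand. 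The cyclotomic argument, however, is cleaner and uniform across all admissible $q$.
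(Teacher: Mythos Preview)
Your argument is correct. The squaring map $NQR(q)\to QR(q)$ is indeed a bijection when $q\equiv 3\pmod 4$, and the translation to the cyclotomic number $(1,0)=\tfrac{f-1}{2}\geq 1$ is clean and accurate.

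The paper, however, proceeds differently. Rather than passing through $\beta\mapsto\beta^2$, it looks at maximal runs of consecutive non-residues: using Lemma~\ref{lemma:1+beta} (equivalently, $(1,1)>0$) it picks a run of length at least two, and then observes that at an endpoint $\beta^*$ of such a run one of $\beta^*\pm 1$ lies in $NQR(q)$ while the other lies in $QR(q)$, so their product is a non-residue. Your approach is more direct and yields the exact count $\tfrac{f-1}{2}$ of admissible $\beta$, which is strictly more information than mere existence. On the other hand, the paper's endpoint argument produces $\beta$ for which one knows \emph{which} of $\beta-1$ and $\beta+1$ is the residue and which is the non-residue; this finer datum is invoked later in the proof of Theorem~\ref{teo:main_main} (``see proof of Lemma~\ref{lemma:NQR}''), so if you intend your argument as a drop-in replacement you should note that any $\beta$ with $(\beta+1)(\beta-1)\in NQR(q)$ automatically has $\beta+1$ and $\beta-1$ in opposite quadratic classes, and that replacing $\beta$ by $-\beta$ swaps the two cases, so both configurations are available.
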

\begin{proof}
	For each $\beta\in NQR(q)$ define $A_\beta=\{a^\beta_1,\ldots,a^\beta_{l_\beta}\}$, where $a^\beta_{i}\in NQR(q)$ and $a^\beta_{i+1}=a^\beta_{i}+1$, for all $i=1, \ldots,l_{\beta-1}$ with $\beta=a_1^\beta$. By Lemma \ref{lemma:1+beta}, there is $\beta\in NQR(q)$ such that $|A_\beta|>1$. If $\beta^*=a^\beta_{l_\beta}$ then $(\beta^*+1)\in NQR(q)$ and $(\beta^*-1)\in QR(q)$.
	On the other hand, if $\beta^*=\beta_1$ then $(\beta^*+1)\in QR(q)$ and $(\beta^*-1)\in NQR(q)$. Hence $(\beta^*+1)(\beta^*-1)\in NQR(q)$.
\end{proof}

\begin{lema}\label{lemma:QR}
	Let $p\equiv3$ (mod 4) be an odd prime power with $p\neq3$. Then there is $\alpha\in QR(q)$ such that $(\alpha+1)(\alpha-1)\in NQR(q)$.
\end{lema}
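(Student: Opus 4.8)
The plan is to mirror the argument of Lemma \ref{lemma:NQR}, interchanging the roles of $QR(q)$ and $NQR(q)$ and invoking Lemma \ref{lemma:1+alpha} in place of Lemma \ref{lemma:1+beta}. First I would record the elementary reformulation: $(\alpha+1)(\alpha-1)\in NQR(q)$ holds exactly when one of $\alpha+1,\alpha-1$ lies in $QR(q)$ and the other in $NQR(q)$, i.e.\ when the two additive neighbours of $\alpha$ have opposite quadratic character. Thus the task reduces to exhibiting a quadratic residue $\alpha$ whose neighbours $\alpha-1$ and $\alpha+1$ disagree in character. I would also note at the outset that $q\equiv 3\pmod 4$ forces $-1\in NQR(q)$, a fact needed to discard a degenerate boundary case.

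Next I would set up the run structure exactly as in Lemma \ref{lemma:NQR}. For $\alpha\in QR(q)$ let $A_\alpha=\{a_1,\ldots,a_l\}$ be the maximal run of consecutive quadratic residues with $a_1=\alpha$ and $a_{j+1}=a_j+1\in QR(q)$, so that by maximality $a_l+1\notin QR(q)$. By Lemma \ref{lemma:1+alpha} there is $\alpha_2\in QR(q)$ with $\alpha_2+1\in QR(q)$, whence $A_{\alpha_2}$ has length at least $2$. I would fix $A=A_{\alpha_2}=\{a_1,\ldots,a_l\}$ with $l\geq 2$ and consider its right endpoint $\alpha^{*}=a_l$.

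The verification that $\alpha^{*}$ works is then short. Since $l\geq 2$, the predecessor $\alpha^{*}-1=a_{l-1}$ lies in the run, hence $\alpha^{*}-1\in QR(q)$. By maximality $\alpha^{*}+1\notin QR(q)$, so $\alpha^{*}+1$ is either $0$ or a non-residue; the possibility $\alpha^{*}+1=0$ is excluded because it would give $\alpha^{*}=-1\in NQR(q)$, contradicting $\alpha^{*}\in QR(q)$. Therefore $\alpha^{*}+1\in NQR(q)$, and since the product of a residue and a non-residue is a non-residue, $(\alpha^{*}+1)(\alpha^{*}-1)\in NQR(q)$, so $\alpha=\alpha^{*}$ is the required element.

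The step I expect to be the main obstacle is justifying that the runs $A_\alpha$ are finite, so that the right endpoint $\alpha^{*}$ exists; the proof of Lemma \ref{lemma:NQR} glosses over this point. For prime $q=p$ it is automatic, since $0$ is neither a residue nor a non-residue and therefore interrupts every run. For a proper prime power one must rule out that an entire additive coset of the prime subfield lies inside $QR(q)$, which again follows from the existence of adjacent residue/non-residue pairs provided by Lemma \ref{lemma:1+alpha}. Once this is granted the endpoint is well defined, and I would present the closing computation in the same compressed style as the proof of Lemma \ref{lemma:NQR}.
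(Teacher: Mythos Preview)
Your proposal is correct and follows essentially the same run-endpoint argument as the paper's proof (which, incidentally, cites Lemma~\ref{lemma:1+beta} where Lemma~\ref{lemma:1+alpha} is clearly intended). Your added care in excluding the degenerate possibility $\alpha^{*}+1=0$ via $-1\in NQR(q)$, and in flagging the termination-of-runs issue for proper prime powers, goes beyond what the paper records; the paper's own proof simply asserts the endpoint conclusions without addressing either point.
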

\begin{proof}
	For each $\alpha\in QR(q)$ define $A_\alpha=\{a^\alpha_1,\ldots,a^\alpha_{l_\alpha}\}$, where $a^\alpha_{i}\in QR(q)$ and $a^\alpha_{i+1}=a^\alpha_{i}+1$, for all $i=1, \ldots,l_{\alpha-1}$ with $\alpha=a_1^\alpha$. By Lemma \ref{lemma:1+beta}, there is $\alpha\in QR(q)$ such that $|A_\alpha|>1$. If $\alpha^*=a^\alpha_{l_\alpha}$ then $(\alpha^*+1)\in NQR(q)$ and $(\alpha^*-1)\in QR(q)$.
	On the other hand, if $\alpha^*=\alpha_1$ then $(\alpha^*+1)\in QR(q)$ and $(\alpha^*-1)\in NQR(q)$. Hence $(\alpha^*+1)(\alpha^*-1)\in NQR(q)$.
\end{proof}

The following theorem is the main theorem of this note. 

\begin{teo}\label{teo:main_main}
	Let $q\equiv$3 (mod 4) be an odd prime power with $q\geq11$, then there is a pair of orthogonal totally $C_4$-free one-factorizations  of $K_{q+1}$.
\end{teo}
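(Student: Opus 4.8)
The plan is to build the whole pair from a single non-residue $\beta$: take $\mathcal F$ to be the one-factorization generated by $S_\beta$ and $\mathcal G$ the one generated by $-S_\beta=\{\{-x,-x\beta\}:x\in QR(q)\}$. For any $\beta\in NQR(q)\setminus\{-1\}$ the starter $S_\beta$ is strong (Horton's Proposition), and since $S$ and $-S$ are orthogonal by the cited theorem, $\mathcal F$ and $\mathcal G$ are orthogonal one-factorizations of $K_{q+1}$. So the work is entirely to make the pair \emph{totally} $C_4$-free, i.e. to kill $4$-cycles in $F_i\cup F_j$, in $G_i\cup G_j$, and in $F_i\cup G_j$. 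The first simplification I would record is that $\phi(z)=-z$ (with $\phi(\infty)=\infty$) is a graph automorphism of $K_{q+1}$ sending $F_i$ to $G_{-i}$; it carries $\mathcal F$ onto $\mathcal G$ and preserves $4$-cycles, so $\mathcal G$ is $C_4$-free if and only if $\mathcal F$ is. Hence the internal condition for both factorizations is handled at once by Lemma \ref{lemma:1_adrian} (it suffices that $\frac{\beta^2+1}{\beta-1}\in QR(q)$), and the cross condition is exactly the hypothesis of Lemma \ref{lemma_Adrian_2} (that $\frac{\beta^2+1}{2}\in QR(q)$ or $\frac{\beta-1}{2}\in QR(q)$, with $\beta\notin\{2,\tfrac12\}$). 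Everything therefore reduces to producing one admissible $\beta$.

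Writing $\chi$ for the quadratic character, Lemma \ref{lemma:1_adrian} asks for $\chi(\beta^2+1)=\chi(\beta-1)$; granting this, the two alternatives in Lemma \ref{lemma_Adrian_2} coincide and both amount to $\chi(\beta-1)=\chi(2)$. So I must exhibit $\beta\in NQR(q)\setminus\{-1,2,\tfrac12\}$ with
\[
\chi(\beta)=-1,\qquad \chi(\beta-1)=\chi(2),\qquad \chi(\beta^2+1)=\chi(2).
\]
Since $\chi(2)=+1$ for $q\equiv7\pmod8$ and $\chi(2)=-1$ for $q\equiv3\pmod8$, this is where the split modulo $8$ (and the comparison with Bao--Ji) enters: in the first case one wants $\beta-1,\beta^2+1\in QR(q)$, in the second $\beta-1,\beta^2+1\in NQR(q)$.

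For the existence of such a $\beta$ I would first note that, because $-1\in NQR(q)$, the squaring map is a bijection $NQR(q)\to QR(q)$; thus as $\beta$ runs over $NQR(q)$ the element $u=\beta^2$ runs over $QR(q)$, and $\chi(\beta^2+1)=\chi(u+1)$ is governed by the cyclotomic numbers $(0,0)=\frac{f-1}{2}$ and $(0,1)=\frac{f+1}{2}$ recorded above (here $f=\frac{q-1}{2}$). The condition $\chi(\beta-1)=\chi(2)$ is of the consecutive-residue type controlled by the run arguments behind Lemmas \ref{lemma:NQR} and \ref{lemma:QR}. The remaining, and hardest, point is the \emph{joint} solvability of all three requirements, which I would settle by the standard expansion
\[
N=\tfrac18\sum_{\beta}\bigl(1-\chi(\beta)\bigr)\bigl(1+\chi(2)\chi(\beta-1)\bigr)\bigl(1+\chi(2)\chi(\beta^2+1)\bigr),
\]
whose main term is $q/8$ and whose cross terms are quadratic-character sums of the squarefree polynomials $\beta(\beta-1)$, $\beta(\beta^2+1)$, $(\beta-1)(\beta^2+1)$ and $\beta(\beta-1)(\beta^2+1)$. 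The two quadratic sums cancel, and Weil's bound yields $8N\ge q-7\sqrt q-O(1)$, so $N>0$ once $q$ exceeds a small explicit threshold; the finitely many prime powers $q\equiv3\pmod4$ with $11\le q$ below it are then checked directly (as in the $q=19$ example), which also explains the hypothesis $q\ge11$.

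The main obstacle is precisely this simultaneous control of $\chi(\beta^2+1)$ together with $\chi(\beta)$ and $\chi(\beta-1)$. Individually each constraint is routine — the squaring bijection plus the cyclotomic numbers handle $\chi(\beta^2+1)$, and the residue-run lemmas handle $\chi(\beta-1)$ — but forcing all three at once is what requires either the character-sum estimate above or a more delicate cyclotomic count, and it is here that the lower bound on $q$ is genuinely used.
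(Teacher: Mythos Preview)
Your overall strategy coincides with the paper's --- use $S_\beta$ and $-S_\beta$ for a well-chosen $\beta\in NQR(q)$ and invoke Lemmas~\ref{lemma:1_adrian} and~\ref{lemma_Adrian_2} --- and your observation that $z\mapsto -z$ carries $\mathcal F$ onto $\mathcal G$ (so $\mathcal G$ is $C_4$-free once $\mathcal F$ is) is a clean point the paper leaves implicit. The genuine divergence is in the existence step. The paper argues elementarily, via the cyclotomic numbers and the consecutive-residue Lemmas~\ref{lemma:1+beta}--\ref{lemma:QR}: using the squaring bijection $NQR(q)\to QR(q)$ it manufactures a $\beta$ with prescribed $\chi(\beta^2\pm1)$ and then splits on whether $2\in QR(q)$ or $2\in NQR(q)$, leaving no residual case check. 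You instead package the three constraints $\chi(\beta)=-1$, $\chi(\beta-1)=\chi(2)$, $\chi(\beta^2+1)=\chi(2)$ into a single count and bound the nonconstant terms by Weil, obtaining $8N\ge q-7\sqrt q-O(1)$; this is uniform and transparent but imports a heavier theorem and still requires a direct verification for the prime powers $11\le q\le 47$ below the Weil threshold. One conceptual gain of your formulation is that you aim squarely at $M_1\cap M_2$ (in the paper's notation), insisting on the hypothesis of Lemma~\ref{lemma:1_adrian} simultaneously with that of Lemma~\ref{lemma_Adrian_2}; the paper's set $M=M_1\cup M_2$ records only the Lemma~\ref{lemma_Adrian_2} condition, so your reduction is actually tighter about what must be verified for the pair to be \emph{totally} $C_4$-free.
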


\begin{proof}
	Let $M=M_1\cup M_2$, where
	\begin{center}
		$M_1=\left\{\beta\in NQR(q)\setminus\{2,2^{-1},-1\}:\frac{\beta^2+1}{2}\in QR(q),\beta^3\neq-1\right\},$
		
		$M_2=\left\{\beta\in NQR(q)\setminus\{2,2^{-1},-1\}:\frac{\beta-1}{2}\in QR(q),\beta^3\neq-1\right\}$,	
	\end{center}
	By Lemma \ref{lemma:1_adrian} and Lemma \ref{lemma_Adrian_2}, we need only show that $M$ is not the empty set:
	Since $-1\in NQR(q)$ then $\beta\to\beta^2$, for all $\in\beta\in NQR(q)$, is a bijection between $NQR(q)$ and $QR(q)$. By Lemma \ref{lemma:QR} there is $\beta\in NQR(q)$ such that $(\beta^2+1)(\beta^2-1)\in NQR(q)$. Furthermore, there are $\beta_1,\beta_2\in NQR(q)$ such that $\beta^2_1-1\in NQR(q)$ and $\beta^2_1+1\in QR(q)$, and $\beta^2_2-1\in QR(q)$ and $\beta^2_2+1\in NQR(q)$, see proof of Lemma \ref{lemma:QR}.
	
	If $2\in QR(q)$ then let $\beta\in NQR(q)$ such that $\beta^2+1\in QR(q)$ and $\beta^2-1\in NQR(q)$, by Lemma \ref{lemma:QR}. We assume that $\beta^3\neq-1$, since if $\beta^3+1=0$ then $\beta^2-\beta+1=0$, which implies that $\beta^2+1=\beta$, a contradiction, since $\beta^2+1\in QR(q)$ and $\beta\in NQR(q)$. Hence $M_1\neq\emptyset$. Since $(\beta^2-1)=(\beta-1)(\beta+1)\in NQR(q)$, then either $\beta-1\in QR(q)$ and $\beta+1\in NQR(q)$, which implies that $M_2\neq\emptyset$, or $\beta-1\in NQR(q)$ and $\beta+1\in QR(q)$. On the other hand, if $2\in NQR(q)$ then let $\beta\in NQR(q)$ such that $\beta-1\in NQR(q)$ and $\beta+1\in QR(q)$, see proof of Lemma \ref{lemma:NQR}. We assume that $\beta^3\neq-1$, since if $\beta^3+1=0$ then $\beta^2-\beta+1=0$, which implies that $\beta(\beta-1)=-1$, which is a contradiction, since $\beta(\beta-1)\in QR(q)$. Since $\beta-1\in NQR(q)$ then $\beta\neq2$. Hence $M_2\neq\emptyset$. Finally, if $\beta^2+1\in NQR(q)$ then $M_1\neq\emptyset$.\qed   
\end{proof}

It is well known, if $q\equiv3$ (mod 8), then $2\in NQR(q)$ (see for example \cite{Ireland}). Therefore

\begin{coro}\label{coro:final}
	Let $q\equiv$3 (mod 8) be an odd prime power with $q\geq11$. If $M=M_1\cup M_2$, where
	\begin{eqnarray*}
		M_1&=&\left\{\beta\in NQR(q)\setminus\{2,2^{-1},-1\}:\beta^2+1\in NQR(q),\beta^3\neq-1\right\}\\
		&&\\
		M_2&=&\left\{\beta\in NQR(q)\setminus\{2,2^{-1},-1\}:\beta-1\in QR(q),\beta^3\neq-1\right\},	
	\end{eqnarray*}
	then the one-factorization generated by the starters $S_{\beta_1}$ and $S_{\beta_2}$, for any pair of different elements $\beta_1,\beta_2\in M$, are totally $C_4$-free of $K_{q+1}$.	
\end{coro}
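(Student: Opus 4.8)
The corollary asserts a universal statement: for \emph{any} two distinct $\beta_1,\beta_2\in M=M_1\cup M_2$, the pair of one-factorizations $\mathcal{F}$ (from $S_{\beta_1}$) and $\mathcal{G}$ (from $S_{\beta_2}$) is totally $C_4$-free. By the definition of totally $C_4$-free, I must show that $F\cup H$ contains no $C_4$ for every $F,H\in\mathcal{F}\cup\mathcal{G}$. Since $q\equiv3\pmod 8$ forces $2\in NQR(q)$, the defining conditions of $M_1,M_2$ here are the specializations of Lemma~\ref{lemma:1_adrian} and Lemma~\ref{lemma_Adrian_2}: when $2\in NQR(q)$, the hypothesis $\frac{\beta^2+1}{2}\in QR(q)$ of Lemma~\ref{lemma_Adrian_2} becomes $\beta^2+1\in NQR(q)$, and I should record at the outset that $\frac{\beta^2+1}{\beta-1}\in QR(q)$ (the hypothesis of Lemma~\ref{lemma:1_adrian}) is likewise implied for $\beta\in M$ under $2\in NQR(q)$, so that \emph{each single starter $S_{\beta_i}$ already generates a $C_4$-free one-factorization}.

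\textbf{Step 1: the three cross-types of pairs.}
I would split the verification of ``totally $C_4$-free'' into the three exhaustive cases dictated by where $F$ and $H$ come from: (a) $F,H\in\mathcal{F}$, i.e. both factors from $S_{\beta_1}$; (b) $F,H\in\mathcal{G}$, both from $S_{\beta_2}$; and (c) $F\in\mathcal{F},H\in\mathcal{G}$, one factor from each. Cases (a) and (b) are exactly the statement that each $S_{\beta_i}$ generates a $C_4$-free one-factorization, which follows from Lemma~\ref{lemma:1_adrian} once I verify $\beta_i$ satisfies its hypothesis. The genuinely new content is case (c), the \emph{cross} pairs $F_0\cup G_i$ with $F_0$ built from $S_{\beta_1}$ and $G_i$ from $S_{\beta_2}$.

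\textbf{Step 2: the cross case.}
For case (c) I would adapt the computation of Lemma~\ref{lemma_Adrian_2}, but now with two \emph{different} base values $\beta_1,\beta_2$ rather than $\beta$ and $-\beta$. Setting $F_0=\{\{\infty,0\}\}\cup\{\{x,x\beta_1\}:x\in QR(q)\}$ and $G_i=\{\{\infty,i\}\}\cup\{\{x+i,x\beta_2+i\}:x\in QR(q)\}$, a putative $C_4$ either uses $\infty$ or not. The $\infty$-subcase yields an equation of the shape $\beta_2^{t_2}+\beta_1^{t_1}-1=0$ with $t_1,t_2\in\{-1,1\}$; the finite subcase yields the two-parameter analogue of equation~\eqref{equation}, namely $a(\beta_1\beta_2^{t_1}-\beta_1^{t_3}\beta_2^{t_2})+i(\beta_2^{t_2+t_3}-\cdots)=0$, which I would solve for $i$ in each of the eight sign-patterns and then derive a contradiction from the quadratic-residue status forced on $a\pm i$, $a\beta_1-i$, etc. Here the crucial point is that the hypotheses live \emph{per starter}: membership $\beta_1\in M_1$ vs.\ $\beta_1\in M_2$ (and likewise for $\beta_2$) controls which residue facts I may invoke, so I would organize the contradiction-hunt by the four combinations $(M_1,M_1),(M_1,M_2),(M_2,M_1),(M_2,M_2)$ and, within each, run the eight sign-cases as in Lemma~\ref{lemma_Adrian_2}. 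The conditions $\beta_i^3\neq-1$ and $\beta_i\notin\{2,2^{-1},-1\}$ are precisely what kill the degenerate sign-patterns (cases (i),(vi),(vii) and the $\infty$-case) uniformly.

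\textbf{Main obstacle.}
The hard part is the cross case (c): unlike Lemma~\ref{lemma_Adrian_2}, where the symmetric relation $\beta\leftrightarrow-\beta$ collapsed the $\infty$-subcase cleanly (forcing $t_1=t_2$) and made many mixed products factor nicely, here $\beta_1$ and $\beta_2$ are unrelated, so the sixteen expressions ${\beta_1^{s}\beta_2^{t}}$ do not simplify by symmetry and I must verify each residue contradiction honestly from the membership data. I expect that the defining conditions of $M$ (residue status of $\beta_i^2+1$ or $\beta_i-1$, together with $2\in NQR(q)$) are exactly strong enough to close every sign-pattern, but confirming that no pattern survives for some $(M_a,M_b)$ combination — in particular that orthogonality of $S_{\beta_1},S_{\beta_2}$ (guaranteed by Horton's theorem for $\beta_1\neq\beta_2$) does not conflict with the $C_4$-free requirement — is where the real bookkeeping lies. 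If a residual pattern cannot be excluded by membership alone, the fallback is to absorb the offending $\beta$ into the excluded set, shrinking $M$ while keeping it nonempty via the cyclotomic counts already established in Lemmas~\ref{lemma:NQR} and~\ref{lemma:QR}.
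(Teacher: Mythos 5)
Your proposal contains genuine gaps and does not constitute a proof. The decisive one is your own Step 2: the cross case (c) is the entire content of the corollary beyond the two lemmas, and you only announce its shape — the two-parameter analogue of \eqref{equation}, eight sign-patterns times four membership combinations — without closing a single case. You concede as much in your ``Main obstacle'' paragraph, and your fallback (deleting offending elements from $M$) would prove a different, weaker statement than the one asserted. Moreover, your claim that the exclusions $\beta_i^3\neq-1$ and $\beta_i\notin\{2,2^{-1},-1\}$ kill the degenerate patterns ``uniformly'' already fails at the first subcase: with two unrelated nonresidues the $\infty$-subcase gives $\beta_1^{t_1}+\beta_2^{t_2}=1$, a genuinely two-variable relation (e.g.\ $\beta_1+\beta_2=1$) that is not excluded by any one-variable condition and would need its own residue argument. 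For comparison, the paper offers no cross analysis for two unrelated $\beta_1,\beta_2$ at all: Lemma~\ref{lemma_Adrian_2} treats only the pair $S_\beta$, $-S_\beta$, and the corollary is obtained in one line from the sets $M_1,M_2$ in the proof of Theorem~\ref{teo:main_main} via the observation that $2\in NQR(q)$ when $q\equiv3\pmod 8$, so that division by $2$ flips quadratic character. Your reading of the statement is thus more ambitious (and arguably the honest one), but as written it is a program, not a proof.

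There is also a concrete false step at the outset. You ``record'' that the hypothesis $\frac{\beta^2+1}{\beta-1}\in QR(q)$ of Lemma~\ref{lemma:1_adrian} is implied for every $\beta\in M$ once $2\in NQR(q)$. It is not: $\frac{\beta^2+1}{\beta-1}$ is a square exactly when $\beta^2+1$ and $\beta-1$ lie in the same quadratic class, and membership in $M_1$ pins down only the class of $\beta^2+1$ (as a nonresidue) while membership in $M_2$ pins down only that of $\beta-1$; nothing controls the other factor. Hence your cases (a) and (b) — each single one-factorization being $C_4$-free — are unjustified for general $\beta\in M$. Relatedly, your blanket assertion that the conditions defining $M_1,M_2$ are ``the specializations'' of the two lemmas fails for $M_2$: with $2\in NQR(q)$, the hypothesis $\frac{\beta-1}{2}\in QR(q)$ of Lemma~\ref{lemma_Adrian_2} translates to $\beta-1\in NQR(q)$, whereas the corollary's $M_2$ requires $\beta-1\in QR(q)$. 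This sign discrepancy is inherited from the paper itself, but a complete proof would have to notice and resolve it before any of the residue bookkeeping in your case (c) could be trusted.
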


Hence, for the case when $q\equiv3$ (mod 8), Corollary \ref{coro:final} generalizes Theorem 7 given by Bao and Ji in \cite{Bao}.

\begin{teo}\cite{Bao}
	Let $q\equiv$3 (mod 8) be an odd prime power with $q\geq11$. If 
	\begin{center}
		$M=\left\{\beta\in NQR(q)\setminus\{2,2^{-1},-1\}:\beta^2+1\in NQR(q),\beta^3\neq-1\right\},$	
	\end{center}
	then the one-factorization generated by the starters $S_{\beta_1}$ and $S_{\beta_2}$, for any pair of different elements $\beta_1,\beta_2\in M$, are totally $C_4$-free of $K_{q+1}$.
\end{teo}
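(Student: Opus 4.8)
The plan is to establish, for the two one-factorizations $\mathcal{F}=\{F_i\}$ and $\mathcal{G}=\{G_i\}$ generated by $S_{\beta_1}$ and $S_{\beta_2}$ (with $F_i=\{\{\infty,i\}\}\cup\{\{x+i,x\beta_1+i\}:x\in QR(q)\}$ and $G_i$ the same with $\beta_2$), the three ingredients that together mean ``totally $C_4$-free'': that $\mathcal{F},\mathcal{G}$ are orthogonal, that each is $C_4$-free by itself, and that the pair is $C_4$-free across factors. Orthogonality is immediate, since $\beta_1,\beta_2\in NQR(q)\setminus\{-1\}$ are distinct and Horton's orthogonality theorem quoted above applies. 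Because $F_i=F_0+i$ and $G_i=G_0+i$, it then suffices to rule out a $C_4$ in each of $F_0\cup F_i$, $G_0\cup G_i$ and $F_0\cup G_i$, for every $i\in\mathbb{F}_q$.

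For the individual $C_4$-freeness of $S_\beta$ with $\beta\in M$, I would rerun the case analysis in the proof of Lemma~\ref{lemma:1_adrian}, but feeding in the defining properties of $M$ rather than the hypothesis $\frac{\beta^2+1}{\beta-1}\in QR(q)$ (which may fail here, e.g. when $\beta-1\in QR(q)$). The case $\infty\in V(C_4)$ gives $\beta+\beta^{-1}=1$, i.e. $\beta^2-\beta+1=0$, i.e. $\beta^3=-1$, excluded by $M$. For $\infty\notin V(C_4)$ one reaches the analogue of \eqref{equation}, and in each of the eight sign patterns I would contradict the required residue status of one of $a\beta-i$, $a-i$, $(a-i)\beta^{t_2}+i$ using $2\in NQR(q)$ (valid because $q\equiv3\pmod 8$) together with $\beta^2+1\in NQR(q)$, the values $2,2^{-1},-1$ disposing of the degenerate subcases. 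The mechanism is that $2\in NQR(q)$ and $\beta^2+1\in NQR(q)$ jointly kill exactly the cases that $\frac{\beta^2+1}{\beta-1}\in QR(q)$ killed before.

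The heart of the argument is the cross term $F_0\cup G_i$. Setting up a $C_4$ as in Lemma~\ref{lemma_Adrian_2}, but now with two distinct moduli $\beta_1$ (in $F_0$) and $\beta_2$ (in $G_i$), I would split on whether $\infty$ is a vertex. If $\infty\in V(C_4)$, the edges at $\infty$ are $\{\infty,0\}\in F_0$ and $\{\infty,i\}\in G_i$, and matching the common fourth vertex forces, according to the residue class of $i$, either $\beta_1+\beta_2^{-1}=1$ or $\beta_1^{-1}+\beta_2=1$. Writing $\phi(x)=(x-1)/x=1-x^{-1}$, these read $\beta_1=\phi(\beta_2)$ and $\beta_2=\phi(\beta_1)$, where $\phi$ is a M\"obius map of order three whose fixed points satisfy $x^2-x+1=0$, i.e. $x^3=-1$. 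If $\infty\notin V(C_4)$ one obtains the two-modulus analogue
\[
a\bigl(\beta_1\beta_2^{t_1}-\beta_2^{t_2}\beta_1^{t_3}\bigr)+i\bigl(\beta_2^{t_2}\beta_1^{t_3}-\beta_1^{t_3}-\beta_2^{t_1}+1\bigr)=0,
\]
and I would run the eight-case analysis, using $2,\beta_1^2+1,\beta_2^2+1\in NQR(q)$ to force a contradiction in every pattern.

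The main obstacle is the $\infty$-subcase of the cross term: it requires that no two distinct elements of $M$ share a common $\phi$-orbit $\{\beta,\phi(\beta),\phi^2(\beta)\}$, and here the definition of $M$ must carry the entire weight. A direct computation shows $\phi(\beta),\phi^2(\beta)\in QR(q)$ exactly when $\beta-1\in NQR(q)$; so when $\beta-1\in NQR(q)$ both lie outside $M\subseteq NQR(q)$ and no collision can occur. The genuinely dangerous case is $\beta-1\in QR(q)$, where $\beta,\phi(\beta),\phi^2(\beta)$ all lie in $NQR(q)$ and one must instead show $(\phi(\beta))^2+1$ and $(\phi^2(\beta))^2+1$ are quadratic residues, equivalently $2\beta^2-2\beta+1\in QR(q)$ and $\beta^2-2\beta+2\in QR(q)$. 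This is precisely where the exclusions are indispensable: the orbit $\{2,2^{-1},-1\}$ is removed by hand, and the fixed points of $\phi$ are exactly the $\beta$ with $\beta^3=-1$ removed by $M$. I would attack the remaining orbit condition through the cyclotomic numbers of $\mathbb{F}_q$ applied to the quadratics $2\beta^2-2\beta+1$ and $\beta^2-2\beta+2$, and I would treat it as the crux and verify it with great care: an uncontrolled $\phi$-orbit collision inside $M$ would produce an honest cross $C_4$ of the form $\infty,0,i\beta_1,i$, so this step is both the hardest and the one on which the truth of the statement genuinely hinges.
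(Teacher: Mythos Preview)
The paper does not give its own proof of this theorem: it is simply quoted from \cite{Bao}, immediately after Corollary~\ref{coro:final}. The paper's own tools, Lemmas~\ref{lemma:1_adrian} and~\ref{lemma_Adrian_2}, handle only a single $S_\beta$ and the special pair $(S_\beta,-S_\beta)=(S_\beta,S_{\beta^{-1}})$; they say nothing about $S_{\beta_1}$ against $S_{\beta_2}$ for two unrelated $\beta$'s, so there is no argument in the paper to compare your proposal against.

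On your proposal itself: the reduction is set up correctly and you have isolated exactly the right obstruction. In the $\infty$-case of the cross term you obtain $\beta_1=\phi(\beta_2)$ or $\beta_1=\phi^2(\beta_2)$ with $\phi(x)=(x-1)/x$, so the claim survives only if $M$ meets each $\phi$-orbit in at most one point. You propose to secure this via cyclotomic numbers, but in fact it cannot be secured from the stated hypotheses. For $q=59$ one has $6,50\in M$: both are non-residues, neither equals $2$, $2^{-1}=30$ or $-1=58$; $6^2+1=37$ and $50^2+1=23$ are non-residues; and $6^3=39$, $50^3=38$ are $\ne-1$. Yet $\phi(6)=5\cdot 6^{-1}=5\cdot 10=50$. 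Taking $\beta_1=50$, $\beta_2=6$ and any $i\in QR(59)$, the vertices $\infty,0,50i,i$ form a $4$-cycle in $F_0\cup G_i$: the edges $\{\infty,0\}$ and $\{i,50i\}$ lie in $F_0$, while $\{\infty,i\}\in G_i$ and, with $x=49i\in QR(59)$, one has $\{x+i,6x+i\}=\{50i,0\}\in G_i$ since $295i\equiv 0$. Thus the statement, read as ``for every pair $\beta_1\ne\beta_2$ in $M$'', fails at $q=59$, and the step you rightly flagged as the crux is not merely unverified but unprovable. Either the theorem has been transcribed imprecisely from \cite{Bao}, or an extra hypothesis (for instance, restricting to one $\beta$ per $\phi$-orbit, which your analysis shows is exactly what is needed) is implicit there. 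Your eight-case analysis of the non-$\infty$ cross equation with two moduli is likewise only asserted, but that is moot once the $\infty$-case already produces a counterexample.
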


{\bf Acknowledgment}

Research was partially supported by SNI and CONACyT.

\end{document}